\newtheorem{thm}{Theorem}[section]
\newtheorem{prop}[thm]{Proposition}
\newtheorem{lemma}[thm]{Lemma}
\newtheorem{cor}[thm]{Corollary}
\newcommand{\N}{\ensuremath{\mathbb{N}}}
\newcommand{\Sn}{\ensuremath{\mathcal{S}_n}}
\newcommand{\rec}{\ensuremath{\mathrm{record}}}
\newcommand{\Root}{\ensuremath{\varnothing}}
\newcommand{\lChild}{\ensuremath{\overline{0}}}
\newcommand{\rChild}{\ensuremath{\overline{1}}}
\newcommand{\bst}[1]{\ensuremath{T\langle{#1}\rangle}}
\newcommand{\bstl}[1]{\ensuremath{\tau\langle{#1}\rangle}}
\newcommand{\undersigma}{\ensuremath{\sigma_-}}
\newcommand{\oversigma}{\ensuremath{\sigma_+}}
\definecolor{theme}{RGB}{15, 87, 24} % hsl(127, 70, 20)
\definecolor{lighttheme}{RGB}{51, 153, 63} % hsl(127, 50, 40)
\definecolor{block}{RGB}{102, 60, 0} % hsl(35, 100, 20)
\definecolor{lightblock}{RGB}{255, 238, 214} % hsl(25, 100, 92)
\definecolor{alert}{RGB}{212, 43, 43} % hsl(0, 66, 50)
\title{The height of record-biased trees}
\author{
    Beno{\^i}t Corsini
}
\date{December 9, 2021}
\subjclass[2010]{Primary: 68Q87, 60C05, 05A05}
\keywords{record-biased trees, record-biased permutations, binary search trees, random trees, random permutations, heights of trees}
\begin{document}

\maketitle

\begin{abstract}
    Given a permutation $\sigma$, its corresponding binary search tree is obtained by recursively inserting the values $\sigma(1),\ldots,\sigma(n)$ into a binary tree so that the label of each node is larger than the labels of its left subtree and smaller than the labels of its right subtree. In 1986, Devroye proved that the height of such trees when $\sigma$ is a random uniform permutation is of order $(c^*+o_\mathbb{P}(1))\log n$ as $n$ tends to infinity, where $c^*$ is the only solution to $c\log(2e/c)=1$ with $c\geq2$. In this paper, we study the height of binary search trees drawn from the record-biased model of permutations, introduced by Auger, Bouvel, Nicaud, and Pivoteau in 2016. The record-biased distribution is the probability measure on the set of permutations whose weight is proportional to $\theta^{\rec(\sigma)}$, where $\rec(\sigma)=|\{i\in[n]:\forall j<i,\sigma(i)>\sigma(j)\}|$. We show that the height of a binary search tree built from a record-biased permutation of size $n$ with parameter $\theta$ is of order $(1+o_\mathbb{P}(1))\max\{c^*\log n,\,\theta\log(1+n/\theta)\}$, hence giving a full characterization of the first order asymptotic behaviour of the height of such trees.
\end{abstract}

\tableofcontents

\section{Introduction}

Let $\N=\{0,1,2,\ldots\}$ be the set of non-negative integers. For $n\in\N$, write $\Sn$ for the set of permutations of the set $[n]=\{1,\ldots,n\}$. In this article, we study properties of \textit{record-biased permutations}~\cite{auger2016analysis}, defined as follows. Let $n\in\N$ and $\theta\in[0,\infty)$. The record-biased distribution with parameters $n$ and $\theta$ is the probability measure $w_{n,\theta}$ on $\Sn$ defined by
\begin{align*}
    w_{n,\theta}(\sigma)=\frac{\theta^{\rec(\sigma)}}{W_{n,\theta}}\,,
\end{align*}
where $\rec(\sigma)=|\{i\in[n]:\forall j\in[i-1],\,\sigma(i)>\sigma(j)\}|$ is the number of records of $\sigma$ and $W_{n,\theta}=\sum_{\sigma\in\Sn}\theta^{\rec(\sigma)}$ is a normalizing constant. This model of random permutations was introduced by Auger, Bouvel, Nicaud, and Pivoteau~\cite{auger2016analysis}, who characterized some of its properties: the asymptotic distribution of the number of records, the number of descents, the first value $\sigma(1)$, and the number of inversions. This paper derives the first order asymptotic behaviour of the heights of the binary search trees associated to record-biased permutations.

\medskip

Let $T_\infty=\{\Root\}\cup\bigcup_{k\geq1}\{\lChild,\rChild\}^k$ be the complete infinite rooted binary tree, where nodes $u$ at depth $|u|\geq1$ are indexed by strings written as $u=u_1\ldots u_{|u|}\in\{\lChild,\rChild\}^{|u|}$. This means that $u$ has parent $u_1\ldots u_{|u|-1}$ and children $u\lChild$ and $u\rChild$. For a set $V\subset T_\infty$ and a node $u\in T_\infty$, let $uV=\{uv,v\in V\}$.

Call a \textit{subtree} of $T_\infty$ (or just ``tree'', for short) a set $T\subset T_\infty$ which is connected when viewed as a subgraph of $T_\infty$. For any subtree $T$ of $T_\infty$, its \textit{root} is defined to be the unique node of $T$ of minimum depth. Given a tree $T$ and any node $u\in T_{\infty}$, let $T(u)=(uT_\infty)\cap T$ be the subtree of $T$ rooted at $u$; note that if $\Root\in T$, then $T(u)=\emptyset$ if and only if $u\notin T$. Finally, for any subtree $T\subset T_\infty$, write $h(T)=\sup(|u|,u\in T)-\inf(|u|,u\in T)$ for its height, corresponding to the greatest distance between any node and the root of $T$. 

Call a \textit{labelled tree} any pair $(T,\tau)$ where $T$ is a subtree of $T_\infty$ and $\tau:T\rightarrow\N$ is an injective function. Furthermore say that $(T,\tau)$ is a \textit{binary search tree} if, for any $u\in T$, and for any $v\in T(u\lChild)$ (respectively $v\in T(u\rChild)$), we have $\tau(u)>\tau(v)$ (respectively $\tau(u)<\tau(v)$); in other words, a binary search tree is a labelled tree where the label of each node is larger than the labels of its left subtree and smaller than the labels of its right subtree.

Given an injective function $f:[n]\rightarrow\N$, call \textit{binary search tree of $f$} and write $\big(\bst{f},\bstl{f}\big)$ for the unique binary search tree such that, for all $i\in[n]$, the set
\begin{align*}
    \bstl{f}^{-1}\Big(\big\{f(1),\ldots,f(i)\big\}\Big)
\end{align*}
is a subtree of $T_\infty$ of size $i$ and rooted at $\Root$. This definition can also be rephrased inductively. Indeed, $\big(\bst{f|_{[i]}},\bstl{f|_{[i]}}\big)$ is obtained from $\big(\bst{f|_{[i-1]}},\bstl{f|_{[i-1]}}\big)$ by inserting the value $f(i)$ in the labelled tree $\big(\bst{f|_{[i-1]}},\bstl{f|_{[i-1]}}\big)$ at the unique location for which $\big(\bst{f|_{[i]}},\bstl{f|_{[i]}}\big)$ remains a binary search tree. An example of this construction can be found in Figure~\ref{fig:BST}.

\begin{figure}[htb]
    \centering
    \begin{tikzpicture}[scale=0.4]
        \begin{scope}
            \draw[line width=0.02cm, lightblock!80!block] (0,0) grid (7,-7);
            \draw[line width=0.025cm, lightblock!30!block, ->] (0,0) -- (7.5,0);
            \draw[line width=0.025cm, lightblock!30!block, ->] (0,0) -- (0,-7.5);
            \node[lightblock!30!block, anchor=east, scale=0.8] at (0,-7){$i$};
            \node[lightblock!30!block, anchor=south, scale=0.8] at (7,0){$f(i)$};
            % \node(2) at (2,-1){};
            % \node(4) at (4,-2){};
            % \node(1) at (1,-3){};
            % \node(6) at (6,-4){};
            % \node(3) at (3,-5){};
            % \node(5) at (5,-6){};
            % \draw[theme, line width=0.05cm] (2.center) -- (1.center);
            % \draw[theme, line width=0.05cm] (2.center) -- (4.center);
            % \draw[theme, line width=0.05cm] (4.center) -- (3.center);
            % \draw[theme, line width=0.05cm] (4.center) -- (6.center);
            % \draw[theme, line width=0.05cm] (6.center) -- (5.center);
            \node[draw, circle, theme, fill=lighttheme, line width=0.05cm, scale=1.3] at (2,-1){};
            % \node[draw, circle, theme, fill=white, line width=0.05cm, scale=1.3] at (4,-2){};
            % \node[draw, circle, theme, fill=white, line width=0.05cm, scale=1.3] at (1,-3){};
            % \node[draw, circle, theme, fill=white, line width=0.05cm, scale=1.3] at (6,-4){};
            % \node[draw, circle, theme, fill=white, line width=0.05cm, scale=1.3] at (3,-5){};
            % \node[draw, circle, theme, fill=white, line width=0.05cm, scale=1.3] at (5,-6){};
            \node[white, scale=0.9](2) at (2,-1){\textbf{2}};
            % \node[theme, scale=0.9](4) at (4,-2){\textbf{4}};
            % \node[theme, scale=0.9](1) at (1,-3){\textbf{1}};
            % \node[theme, scale=0.9](6) at (6,-4){\textbf{6}};
            % \node[theme, scale=0.9](3) at (3,-5){\textbf{3}};
            % \node[theme, scale=0.9](5) at (5,-6){\textbf{5}};
            \node[anchor=north west, scale=0.8] at (0.2,-7.1){$f=(2)$};
        \end{scope}
        \begin{scope}[xshift=10cm]
            \draw[line width=0.02cm, lightblock!80!block] (0,0) grid (7,-7);
            \draw[line width=0.025cm, lightblock!30!block, ->] (0,0) -- (7.5,0);
            \draw[line width=0.025cm, lightblock!30!block, ->] (0,0) -- (0,-7.5);
            \node[lightblock!30!block, anchor=east, scale=0.8] at (0,-7){$i$};
            \node[lightblock!30!block, anchor=south, scale=0.8] at (7,0){$f(i)$};
            \node(2) at (2,-1){};
            \node(4) at (4,-2){};
            % \node(1) at (1,-3){};
            % \node(6) at (6,-4){};
            % \node(3) at (3,-5){};
            % \node(5) at (5,-6){};
            % \draw[theme, line width=0.05cm] (2.center) -- (1.center);
            \draw[theme, line width=0.05cm] (2.center) -- (4.center);
            % \draw[theme, line width=0.05cm] (4.center) -- (3.center);
            % \draw[theme, line width=0.05cm] (4.center) -- (6.center);
            % \draw[theme, line width=0.05cm] (6.center) -- (5.center);
            \node[draw, circle, theme, fill=white, line width=0.05cm, scale=1.3] at (2,-1){};
            \node[draw, circle, theme, fill=lighttheme, line width=0.05cm, scale=1.3] at (4,-2){};
            % \node[draw, circle, theme, fill=white, line width=0.05cm, scale=1.3] at (1,-3){};
            % \node[draw, circle, theme, fill=white, line width=0.05cm, scale=1.3] at (6,-4){};
            % \node[draw, circle, theme, fill=white, line width=0.05cm, scale=1.3] at (3,-5){};
            % \node[draw, circle, theme, fill=white, line width=0.05cm, scale=1.3] at (5,-6){};
            \node[theme, scale=0.9](2) at (2,-1){\textbf{2}};
            \node[white, scale=0.9](4) at (4,-2){\textbf{4}};
            % \node[theme, scale=0.9](1) at (1,-3){\textbf{1}};
            % \node[theme, scale=0.9](6) at (6,-4){\textbf{6}};
            % \node[theme, scale=0.9](3) at (3,-5){\textbf{3}};
            % \node[theme, scale=0.9](5) at (5,-6){\textbf{5}};
            \node[anchor=north west, scale=0.8] at (0.2,-7.1){$f=(2,4)$};
        \end{scope}
        \begin{scope}[xshift=20cm]
            \draw[line width=0.02cm, lightblock!80!block] (0,0) grid (7,-7);
            \draw[line width=0.025cm, lightblock!30!block, ->] (0,0) -- (7.5,0);
            \draw[line width=0.025cm, lightblock!30!block, ->] (0,0) -- (0,-7.5);
            \node[lightblock!30!block, anchor=east, scale=0.8] at (0,-7){$i$};
            \node[lightblock!30!block, anchor=south, scale=0.8] at (7,0){$f(i)$};
            \node(2) at (2,-1){};
            \node(4) at (4,-2){};
            \node(1) at (1,-3){};
            % \node(6) at (6,-4){};
            % \node(3) at (3,-5){};
            % \node(5) at (5,-6){};
            \draw[theme, line width=0.05cm] (2.center) -- (1.center);
            \draw[theme, line width=0.05cm] (2.center) -- (4.center);
            % \draw[theme, line width=0.05cm] (4.center) -- (3.center);
            % \draw[theme, line width=0.05cm] (4.center) -- (6.center);
            % \draw[theme, line width=0.05cm] (6.center) -- (5.center);
            \node[draw, circle, theme, fill=white, line width=0.05cm, scale=1.3] at (2,-1){};
            \node[draw, circle, theme, fill=white, line width=0.05cm, scale=1.3] at (4,-2){};
            \node[draw, circle, theme, fill=lighttheme, line width=0.05cm, scale=1.3] at (1,-3){};
            % \node[draw, circle, theme, fill=white, line width=0.05cm, scale=1.3] at (6,-4){};
            % \node[draw, circle, theme, fill=white, line width=0.05cm, scale=1.3] at (3,-5){};
            % \node[draw, circle, theme, fill=white, line width=0.05cm, scale=1.3] at (5,-6){};
            \node[theme, scale=0.9](2) at (2,-1){\textbf{2}};
            \node[theme, scale=0.9](4) at (4,-2){\textbf{4}};
            \node[white, scale=0.9](1) at (1,-3){\textbf{1}};
            % \node[theme, scale=0.9](6) at (6,-4){\textbf{6}};
            % \node[theme, scale=0.9](3) at (3,-5){\textbf{3}};
            % \node[theme, scale=0.9](5) at (5,-6){\textbf{5}};
            \node[anchor=north west, scale=0.8] at (0.2,-7.1){$f=(2,4,1)$};
        \end{scope}
        \begin{scope}[yshift=-10cm]
            \draw[line width=0.02cm, lightblock!80!block] (0,0) grid (7,-7);
            \draw[line width=0.025cm, lightblock!30!block, ->] (0,0) -- (7.5,0);
            \draw[line width=0.025cm, lightblock!30!block, ->] (0,0) -- (0,-7.5);
            \node[lightblock!30!block, anchor=east, scale=0.8] at (0,-7){$i$};
            \node[lightblock!30!block, anchor=south, scale=0.8] at (7,0){$f(i)$};
            \node(2) at (2,-1){};
            \node(4) at (4,-2){};
            \node(1) at (1,-3){};
            \node(6) at (6,-4){};
            % \node(3) at (3,-5){};
            % \node(5) at (5,-6){};
            \draw[theme, line width=0.05cm] (2.center) -- (1.center);
            \draw[theme, line width=0.05cm] (2.center) -- (4.center);
            % \draw[theme, line width=0.05cm] (4.center) -- (3.center);
            \draw[theme, line width=0.05cm] (4.center) -- (6.center);
            % \draw[theme, line width=0.05cm] (6.center) -- (5.center);
            \node[draw, circle, theme, fill=white, line width=0.05cm, scale=1.3] at (2,-1){};
            \node[draw, circle, theme, fill=white, line width=0.05cm, scale=1.3] at (4,-2){};
            \node[draw, circle, theme, fill=white, line width=0.05cm, scale=1.3] at (1,-3){};
            \node[draw, circle, theme, fill=lighttheme, line width=0.05cm, scale=1.3] at (6,-4){};
            % \node[draw, circle, theme, fill=white, line width=0.05cm, scale=1.3] at (3,-5){};
            % \node[draw, circle, theme, fill=white, line width=0.05cm, scale=1.3] at (5,-6){};
            \node[theme, scale=0.9](2) at (2,-1){\textbf{2}};
            \node[theme, scale=0.9](4) at (4,-2){\textbf{4}};
            \node[theme, scale=0.9](1) at (1,-3){\textbf{1}};
            \node[white, scale=0.9](6) at (6,-4){\textbf{6}};
            % \node[theme, scale=0.9](3) at (3,-5){\textbf{3}};
            % \node[theme, scale=0.9](5) at (5,-6){\textbf{5}};
            \node[anchor=north west, scale=0.8] at (0.2,-7.1){$f=(2,4,1,6)$};
        \end{scope}
        \begin{scope}[xshift=10cm,yshift=-10cm]
            \draw[line width=0.02cm, lightblock!80!block] (0,0) grid (7,-7);
            \draw[line width=0.025cm, lightblock!30!block, ->] (0,0) -- (7.5,0);
            \draw[line width=0.025cm, lightblock!30!block, ->] (0,0) -- (0,-7.5);
            \node[lightblock!30!block, anchor=east, scale=0.8] at (0,-7){$i$};
            \node[lightblock!30!block, anchor=south, scale=0.8] at (7,0){$f(i)$};
            \node(2) at (2,-1){};
            \node(4) at (4,-2){};
            \node(1) at (1,-3){};
            \node(6) at (6,-4){};
            \node(3) at (3,-5){};
            % \node(5) at (5,-6){};
            \draw[theme, line width=0.05cm] (2.center) -- (1.center);
            \draw[theme, line width=0.05cm] (2.center) -- (4.center);
            \draw[theme, line width=0.05cm] (4.center) -- (3.center);
            \draw[theme, line width=0.05cm] (4.center) -- (6.center);
            % \draw[theme, line width=0.05cm] (6.center) -- (5.center);
            \node[draw, circle, theme, fill=white, line width=0.05cm, scale=1.3] at (2,-1){};
            \node[draw, circle, theme, fill=white, line width=0.05cm, scale=1.3] at (4,-2){};
            \node[draw, circle, theme, fill=white, line width=0.05cm, scale=1.3] at (1,-3){};
            \node[draw, circle, theme, fill=white, line width=0.05cm, scale=1.3] at (6,-4){};
            \node[draw, circle, theme, fill=lighttheme, line width=0.05cm, scale=1.3] at (3,-5){};
            % \node[draw, circle, theme, fill=white, line width=0.05cm, scale=1.3] at (5,-6){};
            \node[theme, scale=0.9](2) at (2,-1){\textbf{2}};
            \node[theme, scale=0.9](4) at (4,-2){\textbf{4}};
            \node[theme, scale=0.9](1) at (1,-3){\textbf{1}};
            \node[theme, scale=0.9](6) at (6,-4){\textbf{6}};
            \node[white, scale=0.9](3) at (3,-5){\textbf{3}};
            % \node[theme, scale=0.9](5) at (5,-6){\textbf{5}};
            \node[anchor=north west, scale=0.8] at (0.2,-7.1){$f=(2,4,1,6,3)$};
        \end{scope}
        \begin{scope}[xshift=20cm,yshift=-10cm]
            \draw[line width=0.02cm, lightblock!80!block] (0,0) grid (7,-7);
            \draw[line width=0.025cm, lightblock!30!block, ->] (0,0) -- (7.5,0);
            \draw[line width=0.025cm, lightblock!30!block, ->] (0,0) -- (0,-7.5);
            \node[lightblock!30!block, anchor=east, scale=0.8] at (0,-7){$i$};
            \node[lightblock!30!block, anchor=south, scale=0.8] at (7,0){$f(i)$};
            \node(2) at (2,-1){};
            \node(4) at (4,-2){};
            \node(1) at (1,-3){};
            \node(6) at (6,-4){};
            \node(3) at (3,-5){};
            \node(5) at (5,-6){};
            \draw[theme, line width=0.05cm] (2.center) -- (1.center);
            \draw[theme, line width=0.05cm] (2.center) -- (4.center);
            \draw[theme, line width=0.05cm] (4.center) -- (3.center);
            \draw[theme, line width=0.05cm] (4.center) -- (6.center);
            \draw[theme, line width=0.05cm] (6.center) -- (5.center);
            \node[draw, circle, theme, fill=white, line width=0.05cm, scale=1.3] at (2,-1){};
            \node[draw, circle, theme, fill=white, line width=0.05cm, scale=1.3] at (4,-2){};
            \node[draw, circle, theme, fill=white, line width=0.05cm, scale=1.3] at (1,-3){};
            \node[draw, circle, theme, fill=white, line width=0.05cm, scale=1.3] at (6,-4){};
            \node[draw, circle, theme, fill=white, line width=0.05cm, scale=1.3] at (3,-5){};
            \node[draw, circle, theme, fill=lighttheme, line width=0.05cm, scale=1.3] at (5,-6){};
            \node[theme, scale=0.9](2) at (2,-1){\textbf{2}};
            \node[theme, scale=0.9](4) at (4,-2){\textbf{4}};
            \node[theme, scale=0.9](1) at (1,-3){\textbf{1}};
            \node[theme, scale=0.9](6) at (6,-4){\textbf{6}};
            \node[theme, scale=0.9](3) at (3,-5){\textbf{3}};
            \node[white, scale=0.9](5) at (5,-6){\textbf{5}};
            \node[anchor=north west, scale=0.8] at (0.2,-7.1){$f=(2,4,1,6,3,5)$};
        \end{scope}
    \end{tikzpicture}
    \caption{An example of construction of the binary search tree $(\bst{\sigma},\bstl{\sigma})$ for the permutation $\sigma=(2,4,1,6,3,5)$. The nodes highlighted in green correspond to the most recently inserted elements, and the values on the nodes correspond to the values of the labelling function $\bstl{\sigma}$. This figure represents the recursive construction of binary search trees, where values are inserted one after the other at the unique location such that the label of each node is larger than the labels of its left subtree and smaller than the labels of its right subtree.}
    \label{fig:BST}
\end{figure}
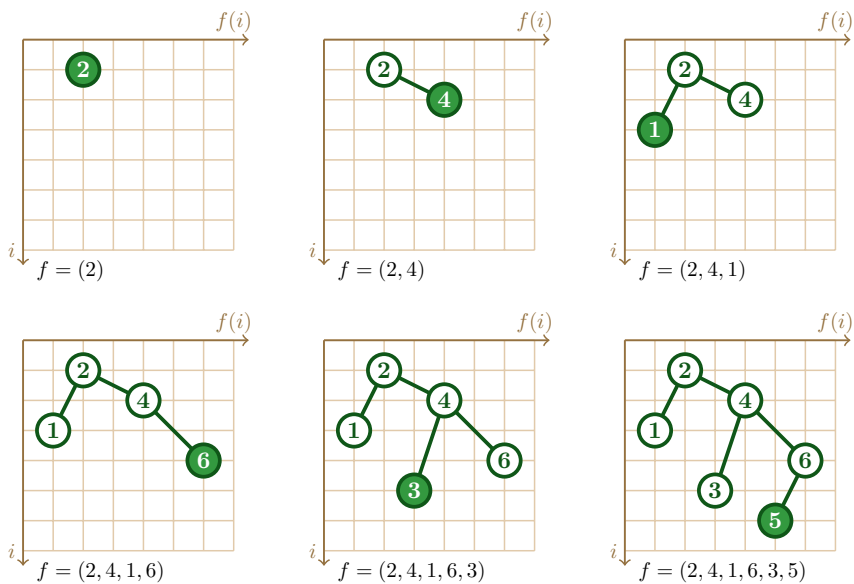

For the rest of the paper, write $T_{n,\theta}$ for a \textit{record-biased tree}, defined to be distributed as $\bst{\sigma}$ where $\sigma$ is $w_{n,\theta}$-distributed. Let $c^*=4.311\ldots$ be the unique solution of $c\log\left(\frac{2e}{c}\right)=1$ with $c\geq2$. The goal of this work is to prove the following result on the height of record-biased trees.

\begin{thm}\label{thm:combined}
    Let $(\theta_n)_{n\geq0}$ be any sequence of non-negative numbers. Then, as $n$ tends to infinity,
    \begin{align*}
        \frac{h(T_{n,\theta_n})}{\max\left\{c^*\log n,\,\theta_n\log\left(1+\frac{n}{\theta_n}\right)\right\}}\longrightarrow1\,,
    \end{align*}
    in probability and in $L^p$ for any $p>0$.
\end{thm}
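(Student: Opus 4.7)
The plan is to establish matching lower and upper bounds in probability, then upgrade to $L^p$ via tail estimates. The key tool is a \emph{recursive decomposition} of $T_{n,\theta}$: writing $V:=\sigma(1)$, I claim that conditionally on $V$, the left subtree of the root is distributed as the BST of a uniform random permutation of $[V-1]$, the right subtree is distributed as $T_{n-V,\theta}$ on the values $\{V+1,\dots,n\}$, and these are conditionally independent. Indeed, $\rec(\sigma)=1+\rec(\oversigma)$ where $\oversigma$ denotes the subsequence of values exceeding $V$ (values smaller than $V$ are never records after position $1$), so the weight $\theta^{\rec(\sigma)}$ factorizes over $(V,\oversigma,\undersigma,\text{interleaving})$ and leaves everything but $\oversigma$ uniform. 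Unrolling the recursion, the rightmost path (``spine'') of $T_{n,\theta}$ is exactly the sequence of record values $V_1<V_2<\dots<V_K=n$, and hanging off the $m$-th spine node is an independent uniform BST $L_m$ of size $|L_m|=V_m-V_{m-1}-1$ (with $V_0:=0$).

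\emph{Lower bound.} Under $w_{n,\theta}$ the record indicators $X_i=\mathbbm{1}\{\sigma(i)>\max_{j<i}\sigma(j)\}$ are independent $\mathrm{Bern}(\theta/(\theta+i-1))$~\cite{auger2016analysis}, so $K=\sum_i X_i$ has mean $(1+o(1))\,\theta\log(1+n/\theta)$ and concentrates. This gives $h(T_{n,\theta})\geq K-1\geq (1-o_{\mathbb{P}}(1))\,\theta\log(1+n/\theta)$. The complementary bound $h(T_{n,\theta})\geq (1-o_{\mathbb{P}}(1))\,c^*\log n$ is only relevant when $\theta=O(1)$: I would observe that $\mathbb{P}(V\geq n/2)$ stays bounded below uniformly in bounded $\theta$, so with positive probability the left subtree of the root is a uniform BST on at least $n/2-1$ elements, to which Devroye's theorem applies directly.

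\emph{Upper bound.} The harder direction. The decomposition gives $h(T_{n,\theta})=\max_{1\leq m\leq K}\{(m-1)+\mathbbm{1}\{L_m\neq\emptyset\}(1+h(L_m))\}$. Applying Devroye's sharp upper bound to each pendant and union-bounding over $m$ yields $h(L_m)\leq(1+o(1))\,c^*\log(1+|L_m|)$ uniformly over $m\leq K$ with high probability, so it remains to prove
\[
    \max_{1\leq m\leq K}\bigl\{m+c^*\log(1+|L_m|)\bigr\}\;\leq\;(1+o(1))\,\max\bigl\{c^*\log n,\,\theta\log(1+n/\theta)\bigr\}.
\]
For this I would exploit the Markov structure of $(V_m)$: given $V_{m-1}$, the increment $V_m-V_{m-1}$ is distributed as $\sigma(1)$ under $w_{n-V_{m-1},\theta}$, whose explicit law makes increments stochastically small when $\theta$ is large and concentrates a constant fraction of the remaining mass otherwise. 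Large-deviation bounds on the joint law of $(|L_m|)$ then enable the required optimization in $m$; in the uniform case this recovers the familiar calculation with $|L_m|$ decaying geometrically and the maximum attained near $m=1$.

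The main difficulty is achieving the sharp constant in the upper bound in the transition regime $\theta\asymp 1$, where both terms in the maximum compete and one must rule out configurations with a simultaneously long spine and a large deep pendant. The $L^p$ statement then follows by combining the first-moment upper tails above with concentration of $K$ and Devroye's lower tails applied to the largest pendant.
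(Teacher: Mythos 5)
Your recursive decomposition is exactly the paper's (Corollary~\ref{cor:inductivePermutation}/Proposition~\ref{prop:inductiveTree}), and the high-level plan --- control the spine length and the pendant sizes, then optimize over the spine position --- matches the paper's. However, there are two concrete gaps.

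\emph{Lower bound.} You write that $\mathbb{P}(V\geq n/2)$ stays bounded below, hence ``with positive probability'' the left subtree is a large uniform BST. Positive probability is not enough: the claim is convergence in probability, so you need the event $\{h(T_{n,\theta})\geq(c^*-\epsilon)\log n\}$ to have probability $1-o(1)$. The repair is not hard --- from the explicit law of $V=\sigma(1)$ one shows $\mathbb{P}(V\geq n/\log n)\to1$ (the paper's Proposition~\ref{prop:lowerBound} does precisely this via $\mathbb{P}(|T_{n,\theta}(\lChild)|\geq k)=\prod_{i\leq k}(1-\tfrac{\theta}{\theta+n-i})$), and then Devroye's lower bound applied to a uniform BST of size at least $n/\log n$ gives $(c^*-\epsilon)\log n$ with probability $1-o(1)$ --- but as written the step does not establish the required statement.

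\emph{Upper bound.} You explicitly identify the core inequality
\begin{align*}
    \max_{1\leq m\leq K}\bigl\{m+c^*\log(1+|L_m|)\bigr\}\leq(1+o(1))\max\bigl\{c^*\log n,\,\theta\log(1+n/\theta)\bigr\}
\end{align*}
as ``the main difficulty'' and leave it unproved: ``large-deviation bounds on the joint law of $(|L_m|)$'' and ``exploit the Markov structure'' are the right instincts but not a proof, and the transition regime $\theta\asymp c^*$ is exactly where the naive version fails. What is missing is (a) a usable stochastic bound on the pendant sizes along the spine, and (b) a tail bound for pendant heights that is uniform over all spine positions and all sizes simultaneously. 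For (a), the paper dominates $|T_{n,\theta}(\rChild^j\lChild)|$ stochastically by $j+n\prod_{i<j}B_i$ with $B_i$ i.i.d.\ $\mathrm{Beta}(\theta+1,1)$ (Proposition~\ref{prop:upperBoundL}), which turns the Markov structure you invoke into a tractable i.i.d.\ product. For (b), rather than citing Devroye's theorem for each pendant and union-bounding (which is delicate when the pendant sizes vary and the number of spine positions grows), the paper conditions on the whole vector of pendant sizes, union-bounds over paths, and controls the probability that $\rChild^{\eta-j}$ lies in a uniform BST of size $k_j$ via the MGF of its record count (Proposition~\ref{prop:upperBound}). This yields a single clean bound of the form $\sum_{j\leq r}(2e^{-t})^{\eta-j}(k_j+1)^{e^t-1}$ that is then combined with (a); setting $t=\log c^*$ recovers the sharp constant through the identity $c^*\log(2e/c^*)=1$. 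In short: your outline is correct and parallels the paper's, but the two hard technical lemmas are asserted rather than proved, and the lower bound as written proves the wrong mode of convergence.
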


For any $n\in\N$ and $\theta\in[0,\infty)$, write
\begin{align}
    \mu(n,\theta)=\sum_{0\leq i<n}\frac{\theta}{\theta+i}\,.\label{eq:mu}
\end{align}
By comparison to the integral, we can see that
\begin{align*}
    \bigg|\mu(n,\theta)-\left[1+\theta\log\left(1+\frac{n}{\theta}\right)\right]\bigg|\leq\theta\log\left(1+\frac{1}{\theta}\right)\,,
\end{align*}
which implies that, as $n$ tends to infinity, we have $\frac{\mu(n,\theta_n)}{1+\theta_n\log(1+n/\theta_n)}\rightarrow1$. It follows that, for any sequence $(\theta_n)_{n\geq0}$ of non-negative numbers,
\begin{align*}
    \max\Big\{c^*\log n,\,\mu(n,\theta_n)\Big\}=\big(1+o(1)\big)\max\left\{c^*\log n,\,\theta_n\log\left(1+\frac{n}{\theta_n}\right)\right\}\,.
\end{align*}
This implies that the convergence in Theorem~\ref{thm:combined} is equivalent to the statement that
\begin{align*}
    \frac{h(T_{n,\theta_n})}{\max\big\{c^*\log n,\,\mu(n,\theta_n)\big\}}\longrightarrow1
\end{align*}
in probability and in $L^p$ for any $p>0$. For the rest of the paper, we aim at proving this statement rather than the one of Theorem~\ref{thm:combined}.

By taking subsequences, to prove Theorem~\ref{thm:combined}, it suffices to consider two cases: if $\theta_n\equiv\theta\in[0,\infty)$ is constant, or if $\theta_n\rightarrow\infty$ as $n\rightarrow\infty$. In the case where $\theta_n=\theta$ for all $n\geq0$, then $\mu(n,\theta_n)\sim\theta\log n$. This case of Theorem~\ref{thm:combined} can be rewritten as follows.

\begin{thm}\label{thm:height}
    For any non-negative number $\theta\in[0,\infty)$, as $n$ tends to infinity,
    \begin{align*}
        \frac{h(T_{n,\theta})}{\log n}\longrightarrow\max\big\{c^*,\theta\big\}\,
    \end{align*}
    in probability and in $L^p$ for any $p>0$.
\end{thm}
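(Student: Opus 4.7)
The plan is to decompose $T_{n,\theta}$ recursively along its rightmost path and to apply Devroye's classical theorem to the uniform BSTs appearing as left subtrees of the successive records.

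The first step is a recursive decomposition: conditionally on $V := \sigma(1)$, the left subtree at the root of $T_{n,\theta}$ is distributed as a uniform BST on $V-1$ labels, the right subtree as $T_{n-V,\theta}$, and the two are conditionally independent. This follows from the identity $\rec(\sigma) = 1 + \rec(\sigma_{>})$ (where $\sigma_{>}$ is the subsequence of $\sigma$ restricted to values exceeding $\sigma(1)$), which factorizes the weight as $\theta^{\rec(\sigma)} = \theta \cdot \theta^{\rec(\sigma_{>})}$ with no dependence on the sub-permutation of smaller values or on the interleaving of the two. Iterating along the rightmost path and writing $v_1 < \cdots < v_{R_n}$ for the record values with $v_0 := 0$, the left subtrees $S_1, \ldots, S_{R_n}$ are conditionally independent uniform BSTs of sizes $L_k := v_k - v_{k-1} - 1$, and
\[
h(T_{n,\theta}) \;=\; (R_n - 1) \vee \max_{1 \leq k \leq R_n}\bigl(k + h(S_k)\bigr).
\]

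For the probabilistic inputs, the standard encoding $a_i = |\{j<i : \sigma(j)>\sigma(i)\}|$ gives independent $a_i$ under $w_{n,\theta}$ with $\mathbb{P}(a_i = 0) = \theta/(\theta+i-1)$, so that $R_n = \sum_i \mathbbm{1}\{a_i = 0\}$ is a sum of independent Bernoullis with mean $(1+o(1))\theta \log n$, concentrated exponentially by Bernstein. For the sizes $L_k$, the key fact is that the record-biased first value $V$ on a tree of size $N$ satisfies $V/N \to \text{Beta}(1,\theta)$ as $N \to \infty$, so that $-\log(1 - V/N) \to \text{Exp}(\theta)$. Iterating, $(\log N_k)_k$ behaves asymptotically as a random walk with i.i.d.\ $\text{Exp}(\theta)$ increments, yielding the typical estimate $\log L_k = \log n - (k-1)/\theta - O_\mathbb{P}(1)$ with square-root fluctuations. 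Devroye's theorem together with its quantitative tails $\mathbb{P}(h(S_k) \geq (c^*+\varepsilon)\log L_k) \leq L_k^{-\delta(\varepsilon)}$ gives $h(S_k) = (c^* + o_\mathbb{P}(1))\log L_k$ as long as $L_k$ is polynomially large, together with the deterministic bound $h(S_k) \leq L_k - 1$ in general. Substituting,
\[
k + h(S_k) \;\approx\; c^* \log n + k\!\left(1 - \frac{c^*}{\theta}\right) + O(1),
\]
valid while $L_k$ is still large. The coefficient of $k$ is negative precisely when $\theta < c^*$, in which case the maximum is attained at $k = 1$ and equals $(1+o_\mathbb{P}(1)) c^* \log n$; it is positive when $\theta > c^*$, in which case the maximum is attained for $k$ close to $R_n$, where $L_k$ has become $O_\mathbb{P}(1)$, giving $k + O(1) \leq R_n + O(1) = (1+o_\mathbb{P}(1))\theta \log n$. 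The matching lower bounds are immediate: $h(T_{n,\theta}) \geq R_n - 1$ handles the $\theta$ side, and $h(T_{n,\theta}) \geq 1 + h(S_1)$ together with $\log L_1/\log n \to 1$ in probability (from the Beta limit of $V/n$) and Devroye's lower bound handle the $c^*$ side.

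The main technical obstacle is establishing the typical-$\log L_k$ estimate with enough uniformity in $k$ to justify the union bound over the $R_n$ subtrees: this requires a martingale concentration for the walk $(\log N_k)_k$ valid up to $k = R_n$, together with a careful handling of the few levels where $N_k$ is no longer large enough for the Beta asymptotics to apply cleanly. Once this is in place, convergence in $L^p$ for any $p > 0$ follows from the exponential tail bounds combined with the deterministic estimate $h(T_{n,\theta}) \leq n - 1$.
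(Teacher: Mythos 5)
Your plan has the same architecture as the paper's proof: decompose $T_{n,\theta}$ along the rightmost path (Proposition~\ref{prop:inductiveTree}), control $\rec(T_{n,\theta})$ via the fact that the record indicators are independent Bernoullis (your Lehmer-code observation is exactly Corollary~\ref{cor:mgfRecord}), bound the sizes of the left subtrees via multiplicative Beta-type decay, feed those sizes to Devroye's theorem for the uniform BSTs hanging off the spine, and finally take a union bound over the $\rec(T_{n,\theta})$ levels. The two lower bounds ($\rec$ and the first left subtree) and the uniform-integrability route to $L^p$ convergence also match the paper's Propositions~\ref{prop:lowerBound} and~\ref{prop:UI}.

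The one point you flag as ``the main technical obstacle'' is precisely where the paper diverges from your plan, and it is worth comparing. You propose to use the distributional limit $V/N\to\mathrm{Beta}(1,\theta)$ and then a martingale concentration for the walk $(\log N_k)_k$, which requires delicate care near the end of the walk where $N_k$ is no longer large enough for the Beta asymptotics to hold, and where your ``$L_k = O_\mathbb{P}(1)$'' claim is not literally justified (some late gaps can still be large). The paper avoids all asymptotics at this step: Lemma~\ref{lem:boundFirstLeft} gives the \emph{non-asymptotic} stochastic bound $|T_{n,\theta}(\rChild)|\preceq nB+1$ with a Beta variable $B$, Proposition~\ref{prop:upperBoundL} iterates it to $|T_{n,\theta}(\rChild^j\lChild)|\preceq j + n\prod_{i<j}B_i$, and Proposition~\ref{prop:boundL} then gets an exponential upper-tail bound on $\sup_{j\le k}|T_{n,\theta}(\rChild^j\lChild)|$ by a one-line Markov/moment computation, valid uniformly in $j$. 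Likewise the paper does not invoke Devroye's two-sided LLN for the upper bound; it bounds $\mathbb{P}(h(\mathrm{RBST}_j)\ge m)$ directly by $2^m\,\mathbb{P}(\rec(\mathrm{RBST}_j)\ge m)$ and a Chernoff bound (Proposition~\ref{prop:upperBound}), which is tighter and cleaner to combine with the union bound. Your route is viable, but the stochastic-domination/Markov route is what lets the paper close the argument without the uniformity headaches you correctly anticipate.
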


When $\theta=1$, the tree $T_{n,\theta}$ is a \textit{random binary search tree}, the binary search tree of a uniformly random permutation, and this case of Theorem~\ref{thm:height} corresponds to a well-known result of Devroye~\cite{devroye1986note}; this case will also be used as input for the proof of the general result. Theorem~\ref{thm:height} furthermore shows that there is a change of behaviour for the height of record-biased trees at the value $\theta=c^*$. Since previous results~\cite{auger2016analysis} on record-biased permutations proved that the number of records is of order $\theta\log n$, this change of behaviour corresponds to the moment where the first-order height of the tree becomes characterized by the length of its rightmost path.

Note that, Theorem~\ref{thm:height} also states that the asymptotic behaviour of the height does not change around the value $\theta=1$. This result might be unexpected since, for $\theta<1$ the records of the permutations are penalized, whereas they are rewarded when $\theta>1$. This strong change of behaviour for the permutation does not affect the height of the tree since, in the case where $\theta<c^*$, the height of $T_{n,\theta}$ is actually mainly characterized by the height of the left subtree of the root.

Theorem~\ref{thm:height} covers the case where $(\theta_n)_{n\geq0}$ is constant in Theorem~\ref{thm:combined}. For the case where $(\theta_n)_{n\geq0}$ diverges to infinity, first note that, for such sequences, $\mu(n,\theta_n)=\omega(\log n)$, where $a_n=\omega(b_n)$ means that $|a_n/b_n|\rightarrow\infty$. This case of Theorem~\ref{thm:combined} can thus be rewritten (and strengthened) as follows.

\begin{thm}\label{thm:strongHeight}
    Let $(\theta_n)_{n\geq0}$ be a sequence of non-negative numbers such that $\theta_n\rightarrow\infty$. Then, as $n$ tends to infinity,
    \begin{align*}
        \frac{h(T_{n,\theta_n})}{\mu(n,\theta_n)}\longrightarrow1
    \end{align*}
    in probability and in $L^p$ for any $p>0$. Moreover, for any $\epsilon>0$ and $\lambda>0$,
    \begin{align*}
        \mathbb{P}\left(\left|\frac{h(T_{n,\theta_n})}{\mu(n,\theta_n)}-1\right|>\epsilon\right)=O\left(\frac{1}{n^\lambda}\right)\,.
    \end{align*}
\end{thm}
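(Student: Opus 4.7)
The plan is to decompose $T_{n,\theta_n}$ along its rightmost path. The starting point is the sequential description of $w_{n,\theta}$ from~\cite{auger2016analysis}: one builds $\sigma$ value by value so that the element inserted at step $i$ is a new record with probability $\theta/(\theta+i-1)$, and otherwise its rank among $\sigma(1),\ldots,\sigma(i)$ is uniform in $\{1,\ldots,i-1\}$, all choices being independent. In particular, the number of records $R_n$ of a $w_{n,\theta_n}$-distributed permutation is a sum of $n$ independent Bernoulli variables with mean $\mu(n,\theta_n)$, so a Chernoff estimate gives
\begin{align*}
    \mathbb{P}\big(|R_n-\mu(n,\theta_n)|>\epsilon\,\mu(n,\theta_n)\big)\le 2\exp\big(-c_\epsilon\,\mu(n,\theta_n)\big),
\end{align*}
and since $\theta_n\to\infty$ forces $\mu(n,\theta_n)=\omega(\log n)$, this is $O(n^{-\lambda})$ for every $\lambda>0$.

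Translated to the BST associated to $\sigma$, a record at step $i$ is appended to the end of the current rightmost path, while a non-record is inserted uniformly at random into one of the $i-1$ ``non-rightmost'' holes of the current tree. Writing $S_1,\ldots,S_{R_n}$ for the left subtrees hanging off the successive records on the rightmost path and $N_j=|S_j|$ for their sizes, the uniformity of non-record insertions over non-rightmost holes means that each hole inside a given $S_j$ is equally likely among the $N_j+1$ holes of $S_j$, so that, conditionally on $R_n$ and $(N_1,\ldots,N_{R_n})$, the subtrees $S_j$ are mutually independent and each $S_j$ has the distribution of $T_{N_j,1}$, a uniformly random binary search tree of size $N_j$. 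Placing the $j$-th record at depth $j-1$ yields the identity
\begin{align*}
    h(T_{n,\theta_n})=\max\Big\{R_n-1,\,\max_{1\le j\le R_n}\big(j+h(S_j)\big)\Big\},
\end{align*}
and the lower bound is immediate: $h(T_{n,\theta_n})\ge R_n-1\ge(1-\epsilon)\mu(n,\theta_n)$ with probability $1-O(n^{-\lambda})$.

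For the upper bound, I would invoke the polynomial tail estimate implicit in Devroye's analysis of random binary search trees: for every $\lambda>0$ there exists $A=A(\lambda)$ such that $\mathbb{P}(h(T_{m,1})>A\log n)\le n^{-\lambda-2}$ uniformly in $m\le n$. A union bound over $j\le n$, combined with the concentration $R_n\le(1+\epsilon)\mu(n,\theta_n)$ from the first step, then gives
\begin{align*}
    \max_{1\le j\le R_n}\big(j+h(S_j)\big)\le (1+\epsilon)\mu(n,\theta_n)+A\log n\le (1+2\epsilon)\mu(n,\theta_n)
\end{align*}
with probability $1-O(n^{-\lambda})$, using once more that $\log n=o(\mu(n,\theta_n))$. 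Together with the lower bound, this yields the required polynomial tail for $h(T_{n,\theta_n})/\mu(n,\theta_n)$, and $L^p$ convergence then follows at once from these tails and the deterministic bound $h(T_{n,\theta_n})\le n-1$. The main technical point I expect to have to justify carefully is the structural claim that the subtrees $S_j$ are, conditionally on their sizes, independent uniformly random binary search trees; once this is in hand, the remainder of the argument reduces cleanly to the Chernoff bound for sums of independent Bernoullis together with Devroye's tail estimate for the uniform model.
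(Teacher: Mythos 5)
Your proposal follows essentially the same route as the paper: decompose along the rightmost path, use Chernoff concentration for the number of records (the paper's Lemma~\ref{lem:boundsRecords}, proved from Corollary~\ref{cor:mgfRecord}), exploit that the left subtrees hanging off the rightmost path are conditionally independent uniform BSTs, bound their heights by $O(\log n)$, and observe $\log n = o(\mu(n,\theta_n))$. The one place your argument is materially incomplete is the structural claim you yourself flag: that conditionally on $R_n$ and $(N_1,\ldots,N_{R_n})$ the subtrees $S_j$ are \emph{independent} uniform BSTs. Your stated justification (``uniformity of non-record insertions over non-rightmost holes means each hole inside a given $S_j$ is equally likely among the $N_j+1$ holes of $S_j$'') does not establish this on its own, since at each step the probability of landing in $S_j$ is proportional to the \emph{current} size of $S_j$ and the sizes are correlated as they grow; one needs a P\'olya-urn/exchangeability argument or, as the paper does, the recursive split of Proposition~\ref{prop:inductiveTree} (proved via Corollary~\ref{cor:inductivePermutation}), which delivers exactly the conditional-independence statement and is then iterated down the rightmost path. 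With that in hand, the remaining differences are cosmetic: you invoke a polynomial tail bound $\mathbb{P}(h(T_{m,1})>A\log n)\le n^{-\lambda-2}$ with a union bound over $j$, whereas the paper derives an equivalent bound from the moment-generating-function estimate in Proposition~\ref{prop:upperBound} (applied with the crude bound $k_j\le n$ and $t=\log(2e)$) and sums a geometric series; both give the same polynomial tail. Your $L^p$ argument via the tail bound together with $h\le n-1$ is sound and slightly more elementary than the paper's Proposition~\ref{prop:UI}.
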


By using the first Borel-Cantelli lemma, the second bound of this result implies that, whenever the random variables of the sequence $(T_{n,\theta_n})_{n\geq0}$ are defined on a common probability space, the first convergence also occurs almost-surely. Together, Theorems~\ref{thm:height} and \ref{thm:strongHeight} establish Theorem~\ref{thm:combined}.

\subsection{Overview of the proofs}\label{subsec:overview}

The main strategy to prove Theorems~\ref{thm:height} and \ref{thm:strongHeight} is based on a similar method to that of~\cite{addario2021height}: controlling the size of the rightmost path and the behaviour of the left subtrees hanging from this rightmost path. We now provide an overview of the proof of Theorem~\ref{thm:height} - a similar method is used to prove Theorem~\ref{thm:strongHeight}. In particular Proposition~\ref{prop:inductiveTree}, Lemma~\ref{lem:boundsRecords}, and Proposition~\ref{prop:upperBoundL} below are all non-asymptotic results, so can be applied in the setting of either theorem.

The main result that we use to understand the overall structure of the tree is given by the following proposition.

\begin{prop}\label{prop:inductiveTree}
    Let $n\in\N$ and $\theta\in[0,\infty)$. Let $T_{n,\theta}$ be a record-biased tree with parameters $n$ and $\theta$. Then, for any $k\in[n]$,
    \begin{align*}
        \mathbb{P}\Big(\big|T_{n,\theta}(\lChild)\big|=k-1\Big)=\mathbb{P}\Big(\big|T_{n,\theta}(\rChild)\big|=n-k\Big)=\frac{\theta}{\theta+n-k}\prod_{1\leq i<k}\left(1-\frac{\theta}{\theta+n-i}\right)\,.
    \end{align*}
    Moreover, conditionally given that $|T_{n,\theta}(\lChild)|=k-1$, $T_{n,\theta}(\lChild)$ is distributed as a random binary search tree of size $k-1$, $T_{n,\theta}(\rChild)$ is distributed as a record-biased tree with parameters $n-k$ and $\theta$, and $T_{n,\theta}(\lChild)$ and $T_{n,\theta}(\rChild)$ are independent of each other. Conversely, the preceding properties completely characterize record-biased binary search trees.
\end{prop}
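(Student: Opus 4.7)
The plan is to prove the proposition by decomposing a $w_{n,\theta}$-distributed permutation $\sigma$ according to its first value $\sigma(1)$. Since the BST construction places $\sigma(1)$ at the root and routes smaller (resp.\ larger) values into the left (resp.\ right) subtree, we have $|T_{n,\theta}(\lChild)|=\sigma(1)-1$ and $|T_{n,\theta}(\rChild)|=n-\sigma(1)$. In particular, the events $\{|T_{n,\theta}(\lChild)|=k-1\}$ and $\{|T_{n,\theta}(\rChild)|=n-k\}$ coincide (both equal $\{\sigma(1)=k\}$), which already yields the first equality in the displayed formula.

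For the marginal probability, I would first establish the classical identity $W_{n,\theta}=\prod_{i=0}^{n-1}(\theta+i)$, using the fact that the number of permutations of $[n]$ with exactly $r$ records is the unsigned Stirling number $c(n,r)$ and that $\sum_r c(n,r)\theta^r$ equals the rising factorial. The key observation is then that, when $\sigma(1)=k$, any position $i>1$ can be a record only if $\sigma(i)>k$; consequently $\rec(\sigma)=1+\rec(\sigma_R)$, where $\sigma_R$ denotes the subsequence $(\sigma(i):i>1,\,\sigma(i)>k)$ read in order of appearance and relabeled as an element of $\mathcal{S}_{n-k}$. Writing $\sigma_L\in\mathcal{S}_{k-1}$ for the analogous subsequence of values below $k$, and letting $\pi\subseteq\{2,\ldots,n\}$ record the positions of values exceeding $k$, the map $\sigma\mapsto(\sigma_L,\sigma_R,\pi)$ is a bijection onto $\mathcal{S}_{k-1}\times\mathcal{S}_{n-k}\times\binom{[n-1]}{n-k}$, and the weight factors as $\theta^{\rec(\sigma)}=\theta\cdot\theta^{\rec(\sigma_R)}$. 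Summing over this bijection yields
\begin{align*}
    \sum_{\sigma:\sigma(1)=k}\theta^{\rec(\sigma)}=\theta\binom{n-1}{k-1}(k-1)!\,W_{n-k,\theta},
\end{align*}
and dividing by $W_{n,\theta}$ and simplifying the resulting product recovers $\frac{\theta}{\theta+n-k}\prod_{1\leq i<k}\left(1-\frac{\theta}{\theta+n-i}\right)$.

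The conditional distribution then falls out almost for free: since $\theta\cdot\theta^{\rec(\sigma_R)}$ depends neither on $\sigma_L$ nor on $\pi$, conditionally on $\sigma(1)=k$ the triple $(\sigma_L,\sigma_R,\pi)$ decomposes into three independent pieces, with $\sigma_L$ uniform on $\mathcal{S}_{k-1}$, $\sigma_R$ record-biased in $\mathcal{S}_{n-k}$ with parameter $\theta$, and $\pi$ uniform on $\binom{[n-1]}{n-k}$. Because $T_{n,\theta}(\lChild)=\bst{\sigma_L}$ and $T_{n,\theta}(\rChild)=\bst{\sigma_R}$ depend only on $\sigma_L$ and $\sigma_R$ respectively (the interleaving $\pi$ plays no role in the BST construction beyond fixing the sizes), the two subtrees are independent with the claimed laws. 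For the converse, the three properties recursively pin down the joint law of $T_{n,\theta}$ — its root label $\sigma(1)=|T_{n,\theta}(\lChild)|+1$, and then the independent laws of its two subtrees — so a straightforward induction on $n$ shows that any random labelled BST satisfying them is distributed as $T_{n,\theta}$.

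The main obstacle is the bookkeeping in the decomposition step: carefully verifying that records past position~$1$ are determined by $\sigma_R$ alone, that $\sigma\mapsto(\sigma_L,\sigma_R,\pi)$ is a genuine bijection with the product set above, and that the ratio $\theta\binom{n-1}{k-1}(k-1)!\,W_{n-k,\theta}/W_{n,\theta}$ telescopes to the advertised product. Beyond that the argument is mechanical.
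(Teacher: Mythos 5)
Your proof is correct, but it takes a genuinely different route from the paper. The paper first imports the sequential generative model of \cite{auger2016analysis} (Proposition~\ref{prop:permDistribution}), which specifies the conditional law of $\sigma^{-1}(i)$ given $\sigma^{-1}(1),\ldots,\sigma^{-1}(i-1)$; it then reads off the marginal law of $\sigma(1)$ by induction on $k$, and extracts the conditional laws of $\undersigma$ and $\oversigma$ by tracking which step-by-step transition probabilities remain after conditioning on $\{\sigma(1)=k\}$. You instead argue by a direct weight computation: you set up the bijection $\sigma\mapsto(\sigma_L,\sigma_R,\pi)$ onto $\mathcal{S}_{k-1}\times\mathcal{S}_{n-k}\times\binom{\{2,\ldots,n\}}{n-k}$, observe that $\rec(\sigma)=1+\rec(\sigma_R)$ so the weight $\theta^{\rec(\sigma)}$ factors through $\sigma_R$ alone, and use the closed form $W_{n,\theta}=\prod_{i=0}^{n-1}(\theta+i)$ (via the Stirling-number generating function) to telescope the ratio. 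Both arguments are sound; yours is more self-contained and combinatorial, directly exhibiting the product structure of the conditional law, while the paper's builds the generative machinery that it reuses elsewhere (Corollary~\ref{cor:mgfRecord}, Lemma~\ref{lem:boundsRecords}). One small point of care: you should note explicitly that $T_{n,\theta}(\lChild)=\lChild\,\bst{\sigma_L}$ rather than $\bst{\sigma_L}$ itself, i.e.\ the left subtree sits rooted at $\lChild$ and is the shifted copy of $\bst{\sigma_L}$; the same holds for the right subtree. This is a notational, not a substantive, gap.
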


The proof of Proposition~\ref{prop:inductiveTree} can be found in Section~\ref{subsec:generating}. Note that this proposition implies that
\begin{align*}
    \mathbb{P}\Big(\big|T_{n,\theta}(\lChild)\big|\geq k\Big)=\prod_{1\leq i\leq k}\left(1-\frac{\theta}{\theta+n-i}\right)\,,
\end{align*}
a fact that will be used multiple times in what follows. This result implies that record-biased trees can be generated inductively starting from the split between left and right subtree at the root. It also states that only the right subtree keeps the $\theta$-record-biased distribution, since the left subtree is a random binary search tree.

Since this work studies the binary search trees associated to record-biased permutations, it is important to understand the relation between the number of records of the permutation and the structure of the tree. Given a permutation $\sigma$, say that $\sigma(i)$ is a record of $\sigma$ if, for all $j<i$, $\sigma(j)<\sigma(i)$. With this definition, note that $\sigma(i)$ is a record of $\sigma$ if and only if the unique node $u\in\bst{\sigma}$ labelled $\sigma(i)$ lies on the rightmost path of $\bst{\sigma}$. Next, for any subtree $T$ of $T_\infty$, let
\begin{align*}
    \rec(T):=\big|\big\{k:\rChild^k\in T\big\}\big|=1+\max\big\{k:\rChild^k\in T\big\}-\min\big\{k:\rChild^k\in T\big\}\,;
\end{align*}
by the above remark, if $T=\bst{\sigma}$, then $\rec(T)=\rec(\sigma)$. From this definition, it is also useful to note that, for a subtree $T$ of $T_\infty$, we have $h(T)\geq\rec(T)-1$.

An important input to the proofs, which is a fairly straightforward consequence of~\cite[Theorem~3]{auger2016analysis}, is the following lemma, stating bounds on the asymptotic behaviour of the number of records of record-biased permutations and trees.

\begin{lemma}\label{lem:boundsRecords}
    Let $\epsilon>0$. Then, there exists $\delta=\delta(\epsilon)>0$ such that, for all $n\in\N$ and $\theta\in[0,\infty)$, for any $w_{n,\theta}$-distributed permutation $\sigma$, we have
    \begin{align*}
        \mathbb{P}\left(\left|\frac{\rec(\sigma)}{\mu(n,\theta)}-1\right|>\epsilon\right)=\mathbb{P}\left(\left|\frac{\rec(T_{n,\theta})}{\mu(n,\theta)}-1\right|>\epsilon\right)\leq e^{-\delta\mu(n,\theta)}\,,
    \end{align*}
    where $\mu(n,\theta)$ is defined as in \eqref{eq:mu}.
\end{lemma}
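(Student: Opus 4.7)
The plan is to reduce the statement to a standard concentration bound for a sum of independent Bernoulli variables. First, the two probabilities in the lemma are equal: by construction $T_{n,\theta}\stackrel{d}{=}\bst{\sigma}$, and the remark preceding the lemma gives $\rec(\bst{\sigma})=\rec(\sigma)$ deterministically. Thus it suffices to prove the concentration statement for $\rec(\sigma)$.

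The key step is to identify $\rec(\sigma)$ in distribution with a sum of independent Bernoullis. I would use the classical identity $W_{n,\theta}=\prod_{i=0}^{n-1}(\theta+i)$, which follows because the number of permutations of $[n]$ with exactly $k$ records is the unsigned Stirling number of the first kind. Setting $p_i=\theta/(\theta+i)$, this yields
\begin{align*}
    \mathbb{E}\big[z^{\rec(\sigma)}\big]=\frac{W_{n,z\theta}}{W_{n,\theta}}=\prod_{i=0}^{n-1}\frac{z\theta+i}{\theta+i}=\prod_{i=0}^{n-1}\big(p_iz+(1-p_i)\big),
\end{align*}
which is the probability generating function of $S=\sum_{i=0}^{n-1}Y_i$ with $Y_i\sim\mathrm{Bernoulli}(p_i)$ independent. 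This decomposition is essentially the content of~\cite[Theorem~3]{auger2016analysis}, and by~\eqref{eq:mu} it immediately gives $\mathbb{E}[S]=\sum_{i=0}^{n-1}p_i=\mu(n,\theta)$.

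Once the Bernoulli decomposition is in place, the multiplicative Chernoff bound for $S$ supplies, for every $\epsilon>0$, a constant $c_\epsilon>0$ depending only on $\epsilon$ such that
\begin{align*}
    \mathbb{P}\big(|S-\mu(n,\theta)|>\epsilon\,\mu(n,\theta)\big)\leq 2e^{-c_\epsilon\mu(n,\theta)}.
\end{align*}
The factor $2$ is absorbed by decreasing $c_\epsilon$ slightly (the lemma is vacuous in the bounded range of $\mu(n,\theta)$ where the absorption would otherwise fail), and taking $\delta(\epsilon)$ equal to the resulting rate yields the claim. Note that uniformity of $\delta$ in $n,\theta$ is automatic, since the multiplicative Chernoff rate depends only on the relative deviation $\epsilon$. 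There is no serious obstacle: the only piece of genuine content is the product form of $W_{n,\theta}$, which instantly gives the Bernoulli decomposition and reduces everything to a textbook tail bound.
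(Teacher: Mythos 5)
Your proof is correct and follows essentially the same route as the paper. Both arguments rest on the observation that $\rec(\sigma)$ is distributed as a sum of independent $\mathrm{Bernoulli}\big(\theta/(\theta+i)\big)$, $0\leq i<n$, random variables with mean $\mu(n,\theta)$ — the paper packages this as the moment generating function of $\rec(\sigma)$ (Corollary~\ref{cor:mgfRecord}) and carries out the Chernoff optimization explicitly, whereas you derive the equivalent probability generating function from the product form of $W_{n,\theta}$ and then invoke multiplicative Chernoff as a black box; the only slightly imprecise spot is the parenthetical claim that the lemma is ``vacuous'' for bounded $\mu(n,\theta)$ (it never is, since $e^{-\delta\mu}<1$ once $\mu>0$), but the paper is equally informal about combining the two one-sided tails into a single rate $\delta$, so this is a cosmetic rather than substantive difference.
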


The proof of Lemma~\ref{lem:boundsRecords}, whose first equality simply follows from the definition of $\rec(T)$, can be found in Section~\ref{subsec:rightLength}. Combining this results with the asymptotic behaviour of $\mu$ gives tight bounds on the asymptotic behaviour of the number of records of $T_{n,\theta}$. In particular, if $\theta$ is fixed, and as $n$ tends to infinity, we have $\rec(T_{n,\theta})=(\theta+o_\mathbb{P}(1))\log n$.

An easy way to see the interest in bounding the number of records of record-biased trees is apparent when considering the following lower bound for their height:
\begin{align*}
    h(T_{n,\theta})\geq\max\Big\{1+h\big(T_{n,\theta}(\lChild)\big),\,\rec(T_{n,\theta})-1\Big\}\,.
\end{align*}
Indeed, this bound, combined with Lemma~\ref{lem:boundsRecords}, implies that $h(T_{n,\theta})\geq(\theta+o_\mathbb{P}(1))\log n$. Moreover, from Proposition~\ref{prop:inductiveTree}, we can verify that
\begin{align*}
    \log\big|T_{n,\theta}(\lChild)\big|=\big(1+o_\mathbb{P}(1)\big)\log n\,.
\end{align*}
Since, conditioned on its size, $T_{n,\theta}(\lChild)$ is a random binary search tree, the case $\theta=1$ of Theorem~\ref{thm:height} (which was already established in~\cite{devroye1986note}) implies that
\begin{align*}
    h\big(T_{n,\theta}(\lChild)\big)=\big(c^*+o_\mathbb{P}(1)\big)\log n\,.
\end{align*}
These two results together show that
\begin{align*}
    h(T_{n,\theta})\geq\max\big\{c^*,\,\theta\big\}\log n+o_\mathbb{P}(\log n)\,,
\end{align*}
corresponding to the lower bound of Theorem~\ref{thm:height}.

For the upper bound, since the left subtrees in $T_{n,\theta}$ are all binary search trees, for which the heights are already known to be concentrated around their expected height~\cite{devroye1986note,drmota2003analytic,reed2003height}, we expect the height of the left subtrees to be well-behaved, conditioned on their sizes. This means that, in order to bound the height of record-biased trees from above, we essentially need to understand two quantities: the size of the rightmost path, and the sizes of the left subtrees hanging from that path.

Let $T_{n,\theta}$ be a record-biased tree with parameters $n$ and $\theta$. Since Lemma~\ref{lem:boundsRecords} already gives us strong bounds on the length of the rightmost path of $T_{n,\theta}$, it only remains to better understand the properties of the sizes of the left subtrees hanging from that path $(|T_{n,\theta}(\rChild^j\lChild)|)_{j\geq0}$. Using Proposition~\ref{prop:inductiveTree}, it is easy to verify that
\begin{align*}
    \mathbb{P}\Big(\big|T_{n,\theta}(\rChild^j\lChild)\big|\geq k\,\Big|\,\big|T_{n,\theta}(\lChild)\big|,\ldots,\big|T_{n,\theta}(\rChild^{j-1}\lChild)\big|\Big)=\prod_{1\leq i\leq k}\left(1-\frac{\theta}{\theta+n-\sum_{0\leq\ell<j}\left(|T_{n,\theta}(\rChild^\ell\lChild)|+1\right)-i}\right)\,;
\end{align*}
this equality coming from the fact that, given $|T_{n,\theta}(\lChild)|,\ldots,|T_{n,\theta}(\rChild^{j-1}\lChild)|$, the tree $T_{n,\theta}(\rChild^j)$ is a record-biased tree of size $n-\sum_{0\leq\ell<j}(|T_{n,\theta}(\rChild^\ell\lChild)|+1)$.

Given two random variables $X$ and $Y$, we say that $X$ is stochastically smaller than $Y$, and write $X\preceq Y$, if and only if, for all $t\in\mathbb{R}$, we have $\mathbb{P}(X\geq t)\leq\mathbb{P}(Y\geq t)$. Using the above mentioned properties, we can prove that the sizes of the left subtrees $(|T_{n,\theta}(\rChild^j\lChild)|)_{j\geq0}$ are stochastically bounded as follows.

\begin{prop}\label{prop:upperBoundL}
    Let $\theta\in[0,\infty)$ and $(B_j)_{j\geq0}$ be a sequence of independent and identically distributed $\mathrm{Beta}(\theta+1,1)$ random variables. Then, for any $n\in\N$ and $j\in\N$, we have
    \begin{align*}
        \big|T_{n,\theta}(\rChild^j\lChild)\big|\preceq j+n\prod_{0\leq i<j}B_i
    \end{align*}
\end{prop}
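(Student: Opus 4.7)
The plan is to prove the stronger (and cleaner) statement that $N_j := |T_{n,\theta}(\rChild^j)|$ satisfies $N_j \preceq n \prod_{0 \leq i < j} B_i$; the proposition will then follow, with the $+j$ term being slack, because $|T_{n,\theta}(\rChild^j\lChild)| < N_j$ and $n \prod_i B_i \leq j + n \prod_i B_i$. By Proposition~\ref{prop:inductiveTree} applied iteratively, $(N_j)_{j \geq 0}$ is a Markov chain with $N_0 = n$; conditionally on $N_j = m$, one has $N_{j+1} = m - L_j - 1$ where $L_j = |T_{n,\theta}(\rChild^j\lChild)|$, with the explicit formula
\[
\mathbb{P}\big(N_{j+1} \geq k \,\big|\, N_j = m\big) = 1 - \prod_{1 \leq i \leq m-k} \frac{m-i}{\theta+m-i} = 1 - \prod_{r=k}^{m-1} \frac{r}{\theta+r}.
\]

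The heart of the argument is the one-step stochastic dominance: conditionally on $N_j = m$, one has $N_{j+1} \preceq mB$ for $B \sim \mathrm{Beta}(\theta+1, 1)$. Since $\mathbb{P}(mB \geq k) = 1 - (k/m)^{\theta+1}$, this is equivalent to the product inequality
\[
\prod_{r=k}^{m-1} \frac{r}{\theta+r} \geq \left(\frac{k}{m}\right)^{\theta+1}, \qquad 1 \leq k \leq m.
\]
Taking logarithms and using the telescoping identity $\log(m/k) = \sum_{r=k}^{m-1} \log(1+1/r)$, this reduces term-by-term to the real-variable inequality $\log(1 + \theta/r) \leq (\theta+1)\log(1 + 1/r)$ for all $r \geq 1$ and $\theta \geq 0$. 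I would prove it by setting $h(\theta) = (\theta+1)\log(1+1/r) - \log(1+\theta/r)$, noting $h(0) > 0$ and $h''(\theta) = 1/(r+\theta)^2 > 0$, so that $h$ is convex and minimized at $\theta^* = 1/\log(1+1/r) - r$. Combining $\log(1+x) \leq x$ with $\log(1+x) \geq 2x/(x+2)$ gives $\theta^* \in [0, 1/2]$, and a short direct estimate then yields $h(\theta^*) \geq 1/(2r) > 0$.

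With the one-step dominance in hand, the full bound $N_j \preceq n \prod_{0 \leq i < j} B_i$ follows from a standard monotone Markov chain coupling: both transition kernels $m \mapsto m - L - 1$ and $m \mapsto mB$ are stochastically non-decreasing in the starting state $m$ (the former because the product $\prod_{r=k}^{m-1} r/(\theta+r)$ in the formula above is decreasing in $m$ for fixed $k$), so the one-step dominance iterates cleanly across $j$ steps against independent copies of $B$. The main obstacle is the analytical inequality $\log(1+\theta/r) \leq (\theta+1)\log(1+1/r)$; the convexity reduction is routine, but controlling the minimizer $\theta^*$ tightly enough for the sign check at $h(\theta^*)$ to succeed requires combining two standard but distinct bounds on $\log(1+x)$.
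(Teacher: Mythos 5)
Your proof is correct, and in fact yields the sharper bound $|T_{n,\theta}(\rChild^j)| \preceq n\prod_{0\leq i<j}B_i$, with the $+j$ in the statement absorbed as slack. The overall strategy coincides with the paper's (a one-step stochastic domination of $|T_{n,\theta}(\rChild)|$ by a scaled Beta variable, iterated along the right spine via Proposition~\ref{prop:inductiveTree}), but the key one-step bound is obtained differently and is sharper. The paper's Lemma~\ref{lem:boundFirstLeft} gives only $|T_{n,\theta}(\rChild)| \preceq nB+1$, by bounding $\prod_{1\leq i\leq k}\frac{n-i}{\theta+n-i}$ from below via $\frac{1}{1+x}\geq e^{-x}$ and a comparison to an integral; this costs a factor in the exponent ($\theta$ rather than $\theta+1$) and forces the $+1$ per step, hence the $+j$ in the final statement. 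You instead work with the exact tail $\mathbb{P}\big(|T_{n,\theta}(\rChild)|\geq k\big)=1-\prod_{r=k}^{n-1}\frac{r}{\theta+r}$ and reduce the sharp one-step bound $|T_{n,\theta}(\rChild)|\preceq nB$ to the term-by-term scalar inequality $\log(1+\theta/r)\leq(\theta+1)\log(1+1/r)$. Your convexity argument for that inequality is correct, but there is a shortcut worth noting: rearranging to $1+\theta/r\leq(1+1/r)^{\theta+1}$, it is immediate from Bernoulli's inequality $(1+x)^p\geq 1+px$ with $p=\theta+1\geq1$ and $x=1/r$, sidestepping the analysis of the minimizer entirely. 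The monotone Markov coupling you invoke for the iteration is standard and valid. Since the $+j$ term is in any case negligible in the paper's subsequent use of this proposition (e.g.\ in Proposition~\ref{prop:boundL}), your sharpening does not alter any downstream conclusion, but it is the cleaner statement.
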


The proof of Proposition~\ref{prop:upperBoundL} can be found in Section~\ref{subsec:boundsLeft}. Using the bound from Proposition~\ref{prop:upperBoundL}, by the law of large numbers and the fact that $\mathbb{E}[\log B_0]=-\frac{1}{\theta}$, we expect to have
\begin{align*}
    \big|T_{n,\theta}(\rChild^j\lChild)\big|\leq j+ne^{-(1+o_\mathbb{P}(1))\frac{j}{\theta}}\,.
\end{align*}
Moreover, since $|T_{n,\theta}(\rChild^j\lChild)|=0$ whenever $j>\rec(T_{n,\theta})$ and, by Lemma~\ref{lem:boundsRecords}, $\rec(T_{n,\theta})=(\theta+o_\mathbb{P}(1))\log n$, the $j$ term in the upper bound will not notably contribute and the previous inequality can be strengthened as follows:
\begin{align*}
    \big|T_{n,\theta}(\rChild^j\lChild)\big|\leq ne^{-(1+o_\mathbb{P}(1))\frac{j}{\theta}}\,.
\end{align*}

By the definition of the height of trees, for any subtree $T$ of $T_\infty$, we have that
\begin{align*}
    h(T)=\max_{0\leq j\leq\rec(T)}\Big\{j+h\big(T(\rChild^j\lChild)\big)\Big\}\,.
\end{align*}
Replacing $T$ with $T_{n,\theta}$ in this equality and using the previous upper bound on the size of the left subtrees and the fact that, conditioned on their sizes, the left subtrees are independent random binary search trees, we obtain that
\begin{align*}
    h(T_{n,\theta})&\leq\max_{0\leq j\leq\rec(T_{n,\theta})}\left\{j+c^*\left(\log n-\big(1+o_\mathbb{P}(1)\big)\frac{j}{\theta}\right)\right\}\\
    &=\max_{0\leq j\leq\rec(T_{n,\theta})}\left\{c^*\log n+j\left(1-\frac{c^*}{\theta}+o_\mathbb{P}(1)\right)\right\}\,.
\end{align*}
The $o_\mathbb{P}(1)$ term comes from the law of large numbers for the heights of binary search trees and these heights are sufficiently concentrated that this term can in fact be taken out of the maximum. This leads to the following upper bound for the height of record-biased trees
\begin{align*}
    h(T_{n,\theta})\leq c^*\log n+\max\left\{0,\,1-\frac{c^*}{\theta}+o_\mathbb{P}(1)\right\}\rec(T_{n,\theta})\,.
\end{align*}
Finally, by using that $\rec(T_{n,\theta})=(\theta+o_\mathbb{P}(1))\log n$ from Lemma~\ref{lem:boundsRecords}, we obtain that
\begin{align*}
    h(T_{n,\theta})\leq\max\big\{c^*,\,\theta\big\}\log n+o_\mathbb{P}(\log n)\,,
\end{align*}
which corresponds to the upper bound of Theorem~\ref{thm:height}.

The rest of the paper is organized in two sections. The first one, Section~\ref{sec:RBTrees}, uses a generative model for record-biased permutations introduced in~\cite{auger2016analysis} to deduce properties of record-biased trees. The second one, Section~\ref{sec:height}, combines the previously described properties to prove the theorems.

Before moving into the details of the proof, we define an important set of events for the study of record-biased trees. Given any $\N$-valued sequence $K=(k_j)_{j\geq0}$, let $E_{n,\theta}(K)$ be the event that the left subtree sizes $(|T_{n,\theta}(\rChild^j\lChild)|)_{j\geq0}$ are given by the entries of $K$:
\begin{align}
    E_{n,\theta}(K):=\Big\{\big|T_{n,\theta}(\rChild^j\lChild)\big|=k_j,\forall j\in\N\Big\}\,.\label{eq:EK}
\end{align}
Naturally, we will only be interested in $K$ such that $\mathbb{P}(E_{n,\theta}(K))>0$. Note that, for any finite subtree $T$ of $T_\infty$, $\rec(T)$ corresponds to the unique value $r\geq1$ such that
\begin{align*}
    r+\sum_{0\leq j<r}\big|T(\rChild^j\lChild)\big|=|T|\,.
\end{align*}
This implies that, for any vector $K$ with $\mathbb{P}(E_{n,\theta}(K))>0$, there is a unique non-negative integer $r=r(K)$ such that $r+\sum_{0\leq j<r}k_j=n$ and on the event $E_{n,\theta}(K)$, necessarily $\rec(T_{n,\theta})=r(K)$. In particular, conditioning on $E_{n,\theta}(K)$ determines $\rec(T_{n,\theta})$.

\subsection{Related work}

As mentioned before, record-biased permutations were just recently introduced~\cite{auger2016analysis}; and prior to the current work, to the best of our knowledge, \cite{auger2016analysis} was the only paper studying the model. However, it is worth situating this model in a slightly larger literature on random permutations. An observation on this model made by the authors is that record-biased permutations can be bijectively mapped to Ewens permutations~\cite{ewens1972sampling} using the Foata bijection~\cite{foata1968netto}. This connection leads to interesting properties of both record-biased permutations and Ewens permutations related to their numbers of records and to their cycle structures. Moreover, although the record-biased model was originally connected with Ewens permutations~\cite{ewens1972sampling}, it is worth noting its strong resemblance to the Mallows model of permutations~\cite{mallows1957non}; this resemblance was part of the inspiration for the current work.

Contrary to record-biased permutations, the literature on binary search trees and their height is vast and we only provide a glimpse here. The first order asymptotic behaviour of the height of random binary search trees was proven by Devroye~\cite{devroye1986note}, who showed that their height is of order $(c^*+o_\mathbb{P}(1))\log n$; this result was built off a previous work of Pittel~\cite{pittel1984growing}, stating that the height was of order $(\alpha+o_\mathbb{P}(1))\log n$, while not being able to identify the constant $\alpha$. Since then, these results have been extended to higher order asymptotic behaviour~\cite{drmota2003analytic,reed2003height}, and to other models of increasing trees~\cite{broutin2008height,drmota2009height}. Recently, similar results upon which this work is based on, proved the first order asymptotic behaviour of the height of Mallows trees, as well as distributional limits under some assumptions on the parameters of the model~\cite{addario2021height}.

Finally, it is worth mentioning that the heights of binary search trees are often closely related to properties of extremes in \textit{branching random walks}. An important example comes from the results of~\cite{broutin2008height,devroye1986note}, which strongly rely on the Hammersley-Kingman-Biggins theorem~\cite{biggins1976first,hammersley1974postulates,kingman1975first}, providing a law of large numbers for the minimum of a branching random walk. Related results on the minimal position in branching random walks can be found in~\cite{addario2009minima,aidekon2013convergence,dekking1991limit,hu2009minimal}.

\section{Properties of record-biased trees}\label{sec:RBTrees}

In this section we provide useful properties of record-biased trees and prove Proposition~\ref{prop:inductiveTree}, Lemma~\ref{lem:boundsRecords} and Proposition~\ref{prop:upperBoundL}. Moreover, we use Proposition~\ref{prop:upperBoundL} to obtain upper tail bounds on the sizes of the left subtrees $(|T_{n,\theta}(\rChild^j\lChild)|)_{j\geq0}$. Most results follow from the generative model of record-biased permutations developed in~\cite{auger2016analysis}.

\subsection{Generating record-biased trees}\label{subsec:generating}

The following proposition was proven in~\cite{auger2016analysis} and gives an easy way to understand and generate record-biased permutations.

\begin{prop}\label{prop:permDistribution}
    Let $n\in\N$ and $\theta\in[0,\infty)$. Let $\sigma$ be a $w_{n,\theta}$-distributed permutation. Then, for all $k\in[n]$, we have
    \begin{align*}
        \mathbb{P}\big(\sigma^{-1}(1)=k\big)=\left\{\begin{array}{ll}
            \frac{\theta}{\theta+n-1} & \textrm{if $k=1$}\\
            \frac{1}{\theta+n-1} & \textrm{otherwise}
        \end{array}\right.\,.
    \end{align*}
    Moreover, by defining $A_i=[n]\setminus\{\sigma^{-1}(1),\ldots,\sigma^{-1}(i-1)\}$, we have
    \begin{align*}
        \mathbb{P}\big(\sigma^{-1}(i)=k\mid\sigma^{-1}(1),\ldots,\sigma^{-1}(i-1)\big)=\left\{\begin{array}{ll}
            \frac{\theta}{\theta+n-i} & \textrm{if $k=\min(A_i)$} \\
            \frac{1}{\theta+n-i} & \textrm{otherwise}
        \end{array}\right.\,.
    \end{align*}
\end{prop}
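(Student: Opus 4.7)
The plan is to relate records of $\sigma$ to a simple condition on the inverse permutation $\sigma^{-1}$. This allows the weight $\theta^{\rec(\sigma)}$ to be factorized across the insertion sequence $\sigma^{-1}(1), \sigma^{-1}(2), \ldots, \sigma^{-1}(n)$, from which both the normalizing constant $W_{n,\theta}$ and the conditional transition probabilities can be read off directly.

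The key combinatorial observation is that $i \in [n]$ is a record value of $\sigma$ (equivalently, $\sigma^{-1}(i)$ is a left-to-right maximum position) if and only if $\sigma^{-1}(i) = \min(A_i)$. Indeed, since $\sigma$ is a bijection, $A_i = \{\sigma^{-1}(i), \sigma^{-1}(i+1), \ldots, \sigma^{-1}(n)\}$, i.e., $A_i$ is precisely the set of positions holding values $\geq i$. The value $i$ is a record iff every position strictly to the left of $\sigma^{-1}(i)$ carries a value strictly less than $i$, which is exactly the statement that no element of $A_i$ is strictly less than $\sigma^{-1}(i)$, i.e., $\sigma^{-1}(i) = \min(A_i)$. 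Consequently, $\rec(\sigma) = \sum_{i=1}^n \mathbbm{1}\{\sigma^{-1}(i) = \min(A_i)\}$ and $\theta^{\rec(\sigma)} = \prod_{i=1}^n \theta^{\mathbbm{1}\{\sigma^{-1}(i) = \min(A_i)\}}$.

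Summing this product over all valid insertion sequences, where $\sigma^{-1}(i)$ ranges over $A_i$ (a set of size $n - i + 1$), exactly one choice in the $i$-th factor contributes $\theta$ and the remaining $n - i$ choices contribute $1$; hence $W_{n,\theta} = \prod_{i=1}^n (\theta + n - i)$. Dividing by $W_{n,\theta}$, the joint law of the insertion sequence factorizes as
\[
    \mathbb{P}\bigl(\sigma^{-1}(i) = k_i \textrm{ for all } i \in [n]\bigr) = \prod_{i=1}^n \frac{\theta^{\mathbbm{1}\{k_i = \min(A_i)\}}}{\theta + n - i},
\]
where $A_i$ is determined by $k_1, \ldots, k_{i-1}$. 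Since the $i$-th factor depends only on $k_i$ and on the past through $A_i$, this immediately gives the claimed conditional distribution of $\sigma^{-1}(i)$; the marginal statement for $\sigma^{-1}(1)$ is just the case $i = 1$ with $A_1 = [n]$ and $\min(A_1) = 1$. The only step requiring genuine thought is the records-to-inverse characterization; once it is established, everything else is a direct calculation.
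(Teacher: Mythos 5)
Your proof is correct. The paper does not reprove Proposition~\ref{prop:permDistribution} (it cites~\cite{auger2016analysis}), but the key observation you make — that the value $i$ is a record of $\sigma$ if and only if $\sigma^{-1}(i)=\min(A_i)$, so that $\theta^{\rec(\sigma)}$ factorizes over the sequence $\sigma^{-1}(1),\ldots,\sigma^{-1}(n)$ and $W_{n,\theta}=\prod_{i=1}^n(\theta+n-i)$ — is exactly the mechanism the paper alludes to immediately after the proposition, and your derivation of the sequential transition law from this factorization is complete and correct.
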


The previous proposition fully describes the joint distribution of the values of $\sigma^{-1}(1),\ldots,\sigma^{-1}(n)$, so uniquely characterizes record-biased permutations. Note that $\sigma(k)$ is a record if and only if $k=\min(A_i)$, and that the probability for $\sigma(k)$ to be a record is independent of the previous values of $\sigma^{-1}(1),\ldots,\sigma^{-1}(i-1)$, which gives a useful way to compute the number of records of a record-biased permutation, as stated in the following corollary.

\begin{cor}\label{cor:mgfRecord}
    Let $n\in\N$ and $\theta\in[0,\infty)$. Let $\sigma$ be a $w_{n,\theta}$-distributed permutation. Then, for any $t\in\mathbb{R}$, the moment generating function of $\rec(\sigma)$ is given by
    \begin{align*}
        \mathbb{E}\left[e^{t\cdot\rec(\sigma)}\right]=\prod_{1\leq i\leq n}\left(1+(e^t-1)\frac{\theta}{\theta+n-i}\right)\,.
    \end{align*}
\end{cor}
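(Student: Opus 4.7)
The plan is to decompose $\rec(\sigma)$ as a sum of independent Bernoulli indicators extracted from the generative model of Proposition~\ref{prop:permDistribution}, and then use independence to factor the moment generating function.

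First I would fix the bijective correspondence between records and the steps of the generative process. Recall that the values $\sigma^{-1}(1),\ldots,\sigma^{-1}(n)$ are constructed sequentially, with $\sigma^{-1}(i)$ denoting the position at which the value $i$ is placed. At the moment that $i$ is inserted, all positions $j<\sigma^{-1}(i)$ are either already occupied by some smaller value (in which case they are no problem) or still empty (in which case they will later be filled by a value $>i$). Consequently $\sigma(\sigma^{-1}(i))=i$ is a record of $\sigma$ if and only if $\sigma^{-1}(i)=\min(A_i)$, where $A_i$ is as in Proposition~\ref{prop:permDistribution}. Define
\begin{align*}
    X_i=\mathbbm{1}\big[\sigma^{-1}(i)=\min(A_i)\big]\,,\quad i\in[n]\,,
\end{align*}
so that $\rec(\sigma)=\sum_{i=1}^n X_i$.

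Next I would use Proposition~\ref{prop:permDistribution} to show that the $X_i$ are independent Bernoulli variables with $\mathbb{P}(X_i=1)=\theta/(\theta+n-i)$. The key observation is that the conditional probability $\mathbb{P}(\sigma^{-1}(i)=\min(A_i)\mid\sigma^{-1}(1),\ldots,\sigma^{-1}(i-1))=\theta/(\theta+n-i)$ supplied by Proposition~\ref{prop:permDistribution} does not depend on the conditioning. Since $(X_1,\ldots,X_{i-1})$ is a measurable function of $(\sigma^{-1}(1),\ldots,\sigma^{-1}(i-1))$, this makes $X_i$ independent of $(X_1,\ldots,X_{i-1})$ with the stated marginal; iterating yields joint independence.

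Finally, by independence,
\begin{align*}
    \mathbb{E}\!\left[e^{t\cdot\rec(\sigma)}\right]=\prod_{i=1}^n\mathbb{E}\!\left[e^{tX_i}\right]=\prod_{i=1}^n\left(\frac{n-i}{\theta+n-i}+e^t\cdot\frac{\theta}{\theta+n-i}\right)=\prod_{i=1}^n\left(1+(e^t-1)\frac{\theta}{\theta+n-i}\right)\,,
\end{align*}
which is the desired identity. The main conceptual step is really the equivalence between records of $\sigma$ and the events $\{\sigma^{-1}(i)=\min(A_i)\}$; everything after that is bookkeeping with the identity-in-law provided by Proposition~\ref{prop:permDistribution}. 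I do not anticipate any genuine obstacle beyond carefully phrasing that equivalence.
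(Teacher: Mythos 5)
Your proof is correct and follows exactly the route the paper indicates: decompose $\rec(\sigma)$ into the indicators $X_i=\mathbf{1}[\sigma^{-1}(i)=\min(A_i)]$, use Proposition~\ref{prop:permDistribution} to see that these are independent Bernoulli variables with success probabilities $\theta/(\theta+n-i)$, and factor the moment generating function. The paper states this as a remark immediately after Proposition~\ref{prop:permDistribution} without writing out the corollary's proof; your argument simply makes that sketch precise.
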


Given a permutation $\sigma\in\Sn$ with $k=\sigma(1)$, write $\undersigma$ for the unique permutation of $[k-1]$ corresponding to the ordering of $\sigma$ on the set $\sigma^{-1}([k-1])$, that is
\begin{align*}
    \undersigma^{-1}(i)<\undersigma^{-1}(j)\,\Longleftrightarrow\sigma^{-1}(i)<\sigma^{-1}(j)
\end{align*}
Similarly, write $\oversigma$ for the unique permutation of $[n-k]$ corresponding to the ordering of $\sigma$ on the set $\sigma^{-1}([n]\setminus[k])$, that is
\begin{align*}
    \oversigma^{-1}(i)<\oversigma^{-1}(j)\,\Longleftrightarrow\sigma^{-1}(k+i)<\sigma^{-1}(k+j)
\end{align*}
The following corollary gives an interesting characterization of $\sigma(1)$, $\undersigma$, and $\oversigma$ when $\sigma$ is a record-biased permutation. 

\begin{cor}\label{cor:inductivePermutation}
    Let $n\in\N$ and $\theta\in[0,\infty)$. Let $\sigma$ be a $w_{n,\theta}$-distributed permutation. Then, for any $k\in[n]$, we have
    \begin{align*}
        \mathbb{P}\big(\sigma(1)=k\big)=\frac{\theta}{\theta+n-k}\prod_{1\leq i<k}\left(1-\frac{\theta}{\theta+n-i}\right)\,.
    \end{align*}
    Moreover, given that $\sigma(1)=k$, $\undersigma$ is a uniformly random permutation of $\mathcal{S}_{k-1}$, $\oversigma$ is a record-biased permutation with parameters $n-k$ and $\theta$, and $\undersigma$ and $\oversigma$ are independent of each other.
\end{cor}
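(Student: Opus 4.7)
The plan is to derive everything directly from the sequential generative description in Proposition~\ref{prop:permDistribution}. Viewing $\sigma$ through the trajectory $\sigma^{-1}(1),\ldots,\sigma^{-1}(n)$, the event $\{\sigma(1)=k\}$ is precisely the event that position $1$ is first chosen at step $k$. For every $i\leq k$, position $1$ is still in $A_i$ and is its minimum (being the smallest element of $[n]$), so by Proposition~\ref{prop:permDistribution} the probability of picking it at step $i$ is $\theta/(\theta+n-i)$ and this probability does not depend on the earlier choices. Multiplying the $k-1$ avoidance probabilities with the final selection probability yields the stated formula for $\mathbb{P}(\sigma(1)=k)$.

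For the conditional assertions, I will split the trajectory on $\{\sigma(1)=k\}$ into three phases: Phase A of steps $1,\ldots,k-1$ placing the values $1,\ldots,k-1$, Phase B being the single step $k$ placing value $k$ at position $1$, and Phase C of steps $k+1,\ldots,n$ placing the values $k+1,\ldots,n$. During Phase A, conditioning on not choosing position $1$ rescales the remaining probabilities in Proposition~\ref{prop:permDistribution} to the uniform distribution on $A_i\setminus\{1\}$. Consequently, on $\{\sigma(1)=k\}$, the sequence $(\sigma^{-1}(1),\ldots,\sigma^{-1}(k-1))$ is a uniform sample without replacement from $\{2,\ldots,n\}$, and $\undersigma$ is the relative order of this sample, hence uniform on $\mathcal{S}_{k-1}$.

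For Phase C, let $R\subset\{2,\ldots,n\}$ be the set of positions still unused after step $k$, of size $n-k$. Conditionally on the Phase A history, the steps $k+1,\ldots,n$ run Proposition~\ref{prop:permDistribution}'s rule on $R$: at step $k+j$, $A_{k+j}\subset R$ and its minimum is selected with weight $\theta/(\theta+n-k-j)$ and each other element with weight $1/(\theta+n-k-j)$. Since $\oversigma$ encodes only the relative ranks of $(\sigma^{-1}(k+1),\ldots,\sigma^{-1}(n))$ inside $R$, this is exactly the generative description of a $w_{n-k,\theta}$-distributed permutation, so $\oversigma\sim w_{n-k,\theta}$; crucially, this conditional law does not depend on the particular $R$ or on the Phase A history. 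Combined with the previous paragraph, $\undersigma$ and $\oversigma$ are measurable with respect to disjoint phases of the process and both have marginal conditional laws that do not depend on the Phase A history, yielding unconditional independence.

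The only point requiring care is the reduction in Phase C: one must verify that removing Phase A leaves the ``pick-the-min'' mechanism intact on $R$. Since $1\notin R$ and Proposition~\ref{prop:permDistribution}'s weights depend only on $|A_{k+j}|$ and on whether a given position is the minimum of $A_{k+j}$, this reduction is immediate, so no genuine difficulty remains.
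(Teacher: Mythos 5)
Your proof is correct and takes essentially the same route as the paper: both rely on the sequential generative description of Proposition~\ref{prop:permDistribution}, observe that position $1$ is $\min(A_i)$ whenever it has not yet been picked, derive $\mathbb{P}(\sigma(1)=k)$ from the avoidance probabilities (the paper packages this as an induction on $k$, you write the product directly, but these are the same computation), show that conditionally on never hitting position $1$ the step-$i$ choice is uniform over $A_i\setminus\{1\}$ (giving uniformity of $\undersigma$), and observe that after step $k$ the residual process on $R$ runs the same pick-the-min mechanism, giving $\oversigma\sim w_{n-k,\theta}$ independently of the Phase~A history. The only mild imprecision is the phrasing that $\undersigma$'s conditional law ``does not depend on the Phase A history'' --- $\undersigma$ is determined by Phase~A, so what you actually want (and what your argument supports) is that the conditional law of $\oversigma$ given the entire Phase~A history is constant, which yields conditional independence of $\oversigma$ from anything Phase~A-measurable, including $\undersigma$; this is exactly how the paper argues.
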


\begin{proof}
    For the distribution of $\sigma(1)$, note that we have
    \begin{align*}
        \mathbb{P}\big(\sigma(1)=k\mid\sigma(1)>k-1\big)=\mathbb{P}\big(\sigma^{-1}(k)=1\mid\sigma(1)>k-1\big)=\frac{\theta}{\theta+n-k}\,,
    \end{align*}
    where the second equality follows from Proposition~\ref{prop:permDistribution} and the fact that $1\in A_k\Leftrightarrow1=\min(A_k)$. By induction, this proves the desired distribution for $\sigma(1)$.
    
    For the distribution of $\undersigma$, note that, if $\sigma(1)=k$, then $1\notin\{\sigma^{-1}(1),\ldots,\sigma^{-1}(k-1)\}$. Combining this with Proposition~\ref{prop:permDistribution}, we have that, for any $i<k$, for any $j_1,\ldots,j_{i-1}$ all distinct in $[n]\setminus\{1\}$, and for any $j\in[n]\setminus\{1,j_1,\ldots,j_{i-1}\}$
    \begin{align*}
        &\mathbb{P}\Big(\sigma^{-1}(i)=j\,\Big|\,\sigma(1)=k,\sigma^{-1}(1)=j_1,\ldots,\sigma^{-1}(i-1)=j_{i-1}\Big)\\
        &\hspace{0.5cm}=\mathbb{P}\Big(\sigma^{-1}(i)=j\,\Big|\,\sigma^{-1}(i)\neq1,\sigma^{-1}(1)=j_1,\ldots,\sigma^{-1}(i-1)=j_{i-1}\Big)\\
        &\hspace{0.5cm}\propto\frac{1}{\theta+n-i}\,.
    \end{align*}
    Since the last value does not depend on $j$, $\sigma^{-1}(i)$ is uniformly distributed over $j\in[n]\setminus\{1,j_1,\ldots,j_{i-1}\}$. Moreover, the definition of $\undersigma$ implies that, for any $\tau\in\mathcal{S}_{k-1}$, we have
    \begin{align*}
        \mathbb{P}\Big(\undersigma=\tau\,\Big|\,\sigma(1)=k\Big)=\mathbb{P}\Big(\sigma^{-1}\big(\tau(1)\big)<\sigma^{-1}\big(\tau(2)\big)<\ldots<\sigma^{-1}\big(\tau(k-1)\big)\,\Big|\,\sigma(1)=k\Big)=\frac{1}{(k-1)!}
    \end{align*}
    which proves that, conditionally given $\sigma(1)=k$, the random permutation $\undersigma$ is uniformly distributed. Finally, for the distribution of $\oversigma$, by writing $A_{k+1}=[n]\setminus\{\sigma^{-1}(1),\ldots,\sigma^{-1}(k)\}=\{x_1<\ldots<x_{n-k}\}$, Proposition~\ref{prop:permDistribution} implies that, for any $\tau\in\mathcal{S}_{n-k}$, we have
    \begin{align*}
        &\mathbb{P}\Big(\sigma^{-1}(k+1)=x_{\tau(1)},\ldots,\sigma^{-1}(n)=x_{\tau(n-k)}\,\Big|\,\sigma^{-1}(1),\ldots,\sigma^{-1}(k-1),\sigma(1)=k\Big)\\
        &\hspace{0.5cm}=\prod_{k<i\leq n}\mathbb{P}\Big(\sigma^{-1}(i)=x_{\tau(i-k)}\,\Big|\,\sigma^{-1}(1),\ldots,\sigma^{-1}(k-1),\sigma(1)=k,\sigma^{-1}(j)=x_{\tau(j-k)},\forall k<j<i\Big)\\
        &\hspace{0.5cm}=w_{n-k,\theta}(\tau)\,.
    \end{align*}
    By the definition of $\oversigma$, this proves that
    \begin{align*}
        \mathbb{P}\Big(\oversigma=\tau\,\Big|\,\sigma^{-1}(1),\ldots,\sigma^{-1}(k-1),\sigma(1)=k\Big)=w_{n-k,\theta}(\tau)\,,    
    \end{align*}
    implying that, conditionally given $\sigma(1)=k$, the random permutation $\oversigma$ is $w_{n-k,\theta}$-distributed and independent of $\undersigma$.
\end{proof}

Proposition~\ref{prop:inductiveTree} now almost directly follows from Corollary~\ref{cor:inductivePermutation}.

\begin{proof}[Proof of Proposition~\ref{prop:inductiveTree}]
    The definition of binary search trees implies that, for any permutation $\sigma$, we have
    \begin{align*}
        \bst{\sigma}(\lChild)=\lChild\bst{\undersigma}
    \end{align*}
    and
    \begin{align*}
        \bst{\sigma}(\rChild)=\rChild\bst{\oversigma}\,.
    \end{align*}
    Combining this with Corollary~\ref{cor:inductivePermutation} proves that, conditionally given their sizes, $T_{n,\theta}(\lChild)$ is a random binary search tree, $T_{n,\theta}(\rChild)$ is a $\theta$-record-biased tree, and they are independent of each other. The direct statement of Proposition~\ref{prop:inductiveTree} now simply follows from Corollary~\ref{cor:inductivePermutation} and the fact that $|\bst{\sigma}(\lChild)|=\sigma(1)-1$. For the converse, use that binary search trees are completely characterized by their left and right subtree distributions to see that these distributions completely characterize record-biased trees.
\end{proof}

\subsection{Length of the rightmost path}\label{subsec:rightLength}

As explained in Section~\ref{subsec:overview}, to prove our results on the height of record-biased trees, we control the length of the rightmost path and bound the sizes of the left subtrees hanging from that path. We now prove Lemma~\ref{lem:boundsRecords}, bounding the number of records of a record-biased permutation, hence bounding the length of the rightmost path of the corresponding binary search tree.

\begin{proof}[Proof of Lemma~\ref{lem:boundsRecords}]
    Using Corollary~\ref{cor:mgfRecord}, for a $w_{n,\theta}$-distributed permutation $\sigma$, for any $t\in\mathbb{R}$, we have
    \begin{align*}
        \mathbb{P}\left[e^{t\cdot\rec(\sigma)}\right]=\prod_{1\leq i\leq n}\left(1+(e^t-1)\frac{\theta}{\theta+n-i}\right)\,.
    \end{align*}
    Now, using Chernoff's bound, it follows that
    \begin{align*}
        \mathbb{P}\Big(\rec(\sigma)>(1+\epsilon)\mu(n,\theta)\Big)\leq e^{-t(1+\epsilon)\mu(n,\theta)}\prod_{1\leq i\leq n}\left(1+(e^t-1)\frac{\theta}{\theta+n-i}\right)\,.
    \end{align*}
    Since $1+x\leq e^x$, we have that
    \begin{align*}
        \mathbb{P}\Big(\rec(\sigma)>(1+\epsilon)\mu(n,\theta)\Big)\leq\exp\Big(-t(1+\epsilon)\mu(n,\theta)+(e^t-1)\mu(n,\theta)\Big)\,,
    \end{align*}
    where $\mu(n,\theta)$ is defined in \eqref{eq:mu}. For $t$ small enough, $-t(1+\epsilon)+(e^t-1)<0$, proving the upper bound of the lemma. Similarly for the lower bound,
    \begin{align*}
        \mathbb{P}\Big(\rec(\sigma)<(1-\epsilon)\mu(n,\theta)\Big)&\leq e^{t(1-\epsilon)\mu(n,\theta)}\prod_{1\leq i\leq n}\left(1+(e^{-t}-1)\frac{\theta}{\theta+n-i}\right)\\
        &\leq\exp\Big(t(1-\epsilon)\mu(n,\theta)+(e^{-t}-1)\mu(n,\theta)\Big)\,,
    \end{align*}
    and once again, for $t$ small enough, $t(1-\epsilon)+(e^{-t}-1)<0$.
\end{proof}

\subsection{Stochastic bound on the left subtrees}\label{subsec:boundsLeft}

We conclude this section with results on the sizes of the left subtree of $T_{n,\theta}$. We start with a lemma useful to bound the size of the right subtree at the root of a record-biased tree.

\begin{lemma}\label{lem:boundFirstLeft}
    Let $\theta\in[0,\infty)$ and let $B$ be a $\mathrm{Beta}(\theta+1,1)$-distributed random variable. Then, for any $n\in\N$,
    \begin{align*}
    \big|T_{n,\theta}(\rChild)\big|\preceq nB+1\,.
    \end{align*}
\end{lemma}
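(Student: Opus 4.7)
The plan is to compute the tail distributions on both sides in closed form and compare them pointwise. Since $|T_{n,\theta}(\rChild)|$ is integer-valued and $nB+1$ is continuous, it suffices to verify the inequality $\mathbb{P}(|T_{n,\theta}(\rChild)|\geq m)\leq\mathbb{P}(nB+1\geq m)$ at each integer $m$.

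First I would combine the identity $|T_{n,\theta}(\lChild)|+|T_{n,\theta}(\rChild)|=n-1$ with the product formula
\begin{align*}
    \mathbb{P}\bigl(|T_{n,\theta}(\lChild)|\geq k\bigr)=\prod_{i=1}^{k}\left(1-\frac{\theta}{\theta+n-i}\right)
\end{align*}
noted after Proposition~\ref{prop:inductiveTree}. After the substitution $j=n-i$, this yields, for $1\leq m\leq n-1$,
\begin{align*}
    \mathbb{P}\bigl(|T_{n,\theta}(\rChild)|\geq m\bigr) = 1 - \prod_{j=m}^{n-1}\frac{j}{\theta+j}\,.
\end{align*}
On the other side, $B\sim\mathrm{Beta}(\theta+1,1)$ has CDF $F_B(x)=x^{\theta+1}$ on $[0,1]$, giving $\mathbb{P}(nB+1\geq m) = 1 - \bigl((m-1)/n\bigr)^{\theta+1}$ for $m$ in the same range. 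After dispatching the boundary cases $m\leq 1$ (where both tail probabilities equal $1$) and $m\geq n$ (where the left-hand side vanishes), the domination $|T_{n,\theta}(\rChild)|\preceq nB+1$ reduces to the numerical inequality
\begin{align*}
    \prod_{j=m}^{n-1}\frac{\theta+j}{j}\;\leq\;\left(\frac{n}{m-1}\right)^{\theta+1}
\end{align*}
for every integer $m$ with $2\leq m\leq n-1$.

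Taking logarithms, this becomes $\sum_{j=m}^{n-1}\log(1+\theta/j)\leq(\theta+1)\log\frac{n}{m-1}$. I would establish it by a two-step integral comparison: since $x\mapsto\log(1+\theta/x)$ is decreasing, the sum is at most $\int_{m-1}^{n-1}\log(1+\theta/x)\,dx$, and $\log(1+y)\leq y$ then bounds that integral by $\theta\log\frac{n-1}{m-1}$. The final comparison $\theta\log\frac{n-1}{m-1}\leq(\theta+1)\log\frac{n}{m-1}$ rearranges to $(n-1)^{\theta}(m-1)\leq n^{\theta+1}$, which is immediate from $(n-1)^{\theta}\leq n^{\theta}$ and $m-1\leq n$.

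I do not foresee any genuine obstacle: once the closed-form tails are aligned, the remaining estimate is a routine chain of elementary inequalities, and the bounds are comfortably slack. The only care required is in tracking the index ranges and dealing cleanly with the boundary values of $m$.
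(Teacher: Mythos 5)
Your argument is correct and follows essentially the paper's own approach: both reduce the domination to a product-versus-power comparison of tail probabilities, and both handle it via $\log(1+y)\le y$ (the paper phrases this as $1/(1+y)\ge e^{-y}$) together with a sum-to-integral comparison. One point worth knowing: the $\mathrm{Beta}(\theta+1,1)$ in the lemma's statement appears to be a typo. The paper's own proof writes $\mathbb{P}(nB+1\le t)=\bigl((t-1)/n\bigr)_+^{\theta}$, and Proposition~\ref{prop:boundL} uses $\mathbb{E}[B_0^t]=\theta/(\theta+t)$; both correspond to $B$ having CDF $x^\theta$ on $[0,1]$, that is, $\mathrm{Beta}(\theta,1)$ in the usual parametrization, not $x^{\theta+1}$. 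The stronger $\mathrm{Beta}(\theta,1)$ domination is what the downstream argument actually needs (it gives $\mathbb{E}[\log B_0]=-1/\theta$, hence the decay rate $e^{-j/\theta}$ for the left subtree sizes and the constant $\max\{c^*,\theta\}$; the $(\theta+1)$-version would not close the gap with the lower bound). Fortunately your computation already proves the stronger version: you arrived at $\sum_{j=m}^{n-1}\log(1+\theta/j)\le\theta\log\tfrac{n-1}{m-1}\le\theta\log\tfrac{n}{m-1}$, which is exactly the $\mathrm{Beta}(\theta,1)$ inequality, and the final relaxation to exponent $\theta+1$ simply discarded slack you did not need.
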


\begin{proof}
    This statement holds when $n=0$ since $0\leq1$, so we can now assume that $n\geq1$. By Proposition~\ref{prop:inductiveTree}, for any $0\leq k\leq n-1$, we have
    \begin{align*}
        \mathbb{P}\Big(\big|T_{n,\theta}(\rChild)\big|\leq n-k-1\Big)=\mathbb{P}\Big(\big|T_{n,\theta}(\lChild)\big|\geq k\Big)=\prod_{1\leq i\leq k}\left(1-\frac{\theta}{\theta+n-i}\right)\,.
    \end{align*}
    By using that $\frac{1}{1+x}\geq e^{-x}$ along with the fact that $1-\frac{\theta}{\theta+n-i}=\frac{1}{1+\frac{\theta}{n-i}}$, this yields the lower bound
    \begin{align*}
        \mathbb{P}\Big(\big|T_{n,\theta}(\rChild)\big|\leq n-k-1\Big)\geq\exp\left(-\theta\sum_{1\leq i\leq k}\frac{1}{n-i}\right)\,.
    \end{align*}
    By comparison to the integral, it follows that
    \begin{align*}
        \mathbb{P}\Big(\big|T_{n,\theta}(\rChild)\big|\leq n-k-1\Big)\geq\exp\left(-\theta\int_{n-k-1}^{n-1}\frac{1}{t}dt\right)\geq\left(\frac{n-k-1}{n}\right)^\theta\,.
    \end{align*}
    Using this bound, for any $t\in[0,n-1]$, we have
    \begin{align*}
        \mathbb{P}\Big(\big|T_{n,\theta}(\rChild)\big|\leq t\Big)=\mathbb{P}\Big(\big|T_{n,\theta}(\rChild)\big|\leq \lfloor t\rfloor\Big)\geq\left(\frac{\lfloor t\rfloor}{n}\right)^\theta\,.
    \end{align*}
    Finally, use that
    \begin{align*}
        \mathbb{P}\big(nB+1\leq t\big)=\left(\left(\frac{t-1}{n}\right)_+\right)^\theta\leq\left(\frac{\lfloor t\rfloor}{n}\right)^\theta
    \end{align*}
    to conclude the proof of the proposition.
\end{proof}

Note that there is a natural stochastic lower bound for $|T_{n,\theta}(\rChild)|$, given by $nB-1-\theta$, whose proof essentially follows the same line of argument as the upper bound. However, this lower bound is not useful for our argument, so we omit it.

Using Lemma~\ref{lem:boundFirstLeft}, we can now prove Proposition~\ref{prop:upperBoundL}.

\begin{proof}[Proof of Proposition~\ref{prop:upperBoundL}]
    By Lemma~\ref{lem:boundFirstLeft},
    \begin{align*}
        \mathbb{P}\big(\big|T_{n,\theta}(\rChild)\big|\leq k\big)\geq\mathbb{P}\big(1+nB_0\leq k\big)\,.
    \end{align*}
    Now, by using properties of subtrees of a record-biased tree from Proposition~\ref{prop:inductiveTree}, we have that
    \begin{align*}
        \mathbb{P}\Big(\big|T_{n,\theta}(\rChild^j)\big|\leq k\,\Big|\,\big|T_{n,\theta}(\rChild^{j-1})\big|=\ell\Big)=\mathbb{P}\Big(\big|T_{\ell,\theta}(\rChild)\big|\leq k\Big)\geq\mathbb{P}\Big(1+\ell B_{j-1}\leq k\Big)\,.
    \end{align*}
    It follows that
    \begin{align*}
        \big|T_{n,\theta}(\rChild^j)\big|\preceq1+B_{j-1}\left(j-1+n\prod_{0\leq i<j-1}B_i\right)\leq j+n\prod_{0\leq i<j}B_i\,.
    \end{align*}
    To conclude this proof, simply use that $|T_{n,\theta}(\rChild^j\lChild)|\leq|T_{n,\theta}(\rChild^j)|$.
\end{proof}

We can now conclude this section with a rather sharp upper tail bound on the sizes of the left subtrees in $T_{n,\theta}$.

\begin{prop}\label{prop:boundL}
    Let $n\in\N$ and $\theta\in[0,\infty)$. Fix $\epsilon>0$ such that $\epsilon\theta<1$. Then, for any $M\in[0,\infty)$ and $k\in\N$ such that $ke^{\left(\frac{1}{\theta}-\epsilon\right)k}<ne^M$, we have
    \begin{align*}
        \mathbb{P}\Big(\exists j\leq k,\big|T_{n,\theta}(\rChild^j\lChild)\big|>ne^{-\left(\frac{1}{\theta}-\epsilon\right)+M}\Big)\leq Ce^{-\lambda M}\cdot\left(1-\frac{ke^{\left(\frac{1}{\theta}-\epsilon\right)k}}{ne^M}\right)^{-\lambda}
    \end{align*}
    where $C=\frac{1}{1-(1-\epsilon\theta)e^{\epsilon\theta}}$ and $\lambda=\frac{\epsilon\theta^2}{1-\epsilon\theta}$.
\end{prop}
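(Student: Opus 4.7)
My plan is to combine Proposition~\ref{prop:upperBoundL} with a Markov-type tail bound on $\prod_{i<j} B_i$, and then take a union bound over $j \in \{0,1,\ldots,k\}$. All the real work reduces to choosing the Markov exponent $\lambda$ correctly and to absorbing the additive $j$ from Proposition~\ref{prop:upperBoundL} into the target.

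First, write $R_j := ne^{-(1/\theta-\epsilon)j + M}$. By Proposition~\ref{prop:upperBoundL} applied marginally at each $j \in \{0,\ldots,k\}$, the event $\{|T_{n,\theta}(\rChild^j\lChild)| > R_j\}$ is stochastically dominated by $\{j + n\prod_{i<j} B_i > R_j\}$, so its probability is at most $\mathbb{P}(\prod_{i<j} B_i > (R_j - j)/n)$. The hypothesis $ke^{(1/\theta-\epsilon)k} < ne^M$ together with the fact that $j \mapsto je^{(1/\theta-\epsilon)j}$ is increasing (which uses $1/\theta - \epsilon > 0$, i.e., $\epsilon\theta < 1$) implies that, for every such $j$,
\[
\frac{j}{R_j} \;=\; \frac{j e^{(1/\theta-\epsilon)j}}{ne^M} \;\leq\; \frac{k e^{(1/\theta-\epsilon)k}}{ne^M} \;=:\; \alpha \;<\; 1.
\]
Consequently $(R_j - j)/n \geq (1-\alpha)\,e^{-(1/\theta-\epsilon)j + M}$, and it suffices to bound $\mathbb{P}(\prod_{i<j} B_i > (1-\alpha) e^{-(1/\theta-\epsilon)j + M})$.

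Next, I would apply Markov's inequality to $(\prod_{i<j} B_i)^\lambda$ with $\lambda := \epsilon\theta^2/(1-\epsilon\theta) > 0$, using the moment formula $\mathbb{E}[B^\lambda] = \theta/(\theta+\lambda)$ (a direct integration against the density of $B$). By independence of the $B_i$'s this yields
\[
\mathbb{P}\!\Big(\prod_{i<j} B_i > (1-\alpha) e^{-(1/\theta-\epsilon)j+M}\Big) \;\leq\; (1-\alpha)^{-\lambda}\, e^{-\lambda M}\, q^j, \qquad q \;:=\; \frac{\theta}{\theta+\lambda}\, e^{\lambda(1/\theta-\epsilon)}.
\]
The algebraic point that makes the claimed constant $C$ appear cleanly is that with this specific $\lambda$ one has $\theta+\lambda = \theta/(1-\epsilon\theta)$ and $\lambda(1/\theta-\epsilon) = \epsilon\theta$, hence $q = (1-\epsilon\theta)e^{\epsilon\theta}$; this is strictly less than $1$ for $\epsilon\theta \in (0,1)$ because $x \mapsto (1-x)e^x$ equals $1$ at $x=0$ and has derivative $-xe^x \leq 0$.

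Finally, summing over $j \in \{0,1,\ldots,k\}$ by a union bound and dominating the tail of the geometric series by $\sum_{j\geq 0} q^j = 1/(1-q) = C$ produces exactly the claimed bound $C(1-\alpha)^{-\lambda} e^{-\lambda M}$. No step is really a genuine obstacle; the only subtlety is the reverse engineering of $\lambda$ described above, which is forced by demanding that the base of the geometric sum equal the sharp ratio $(1-\epsilon\theta)e^{\epsilon\theta}$.
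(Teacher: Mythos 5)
Your proof is correct and takes essentially the same approach as the paper's: reduce via Proposition~\ref{prop:upperBoundL} to a tail bound on $\prod_{i<j}B_i$, control the additive $j$ using the ratio $\alpha$ (the paper's $\Xi_k$), apply Markov with the same exponent $\lambda$, and sum the resulting geometric series with ratio $q=(1-\epsilon\theta)e^{\epsilon\theta}$. You also correctly read the exponent in the statement as $-(1/\theta-\epsilon)j+M$ (the printed statement drops the $j$, evidently a typo, as both the paper's own proof and its later use of the proposition confirm), and you spell out the verification that $q<1$ a bit more explicitly than the paper does.
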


\begin{proof}
    Fix $j\leq k$. Using the bound from Proposition~\ref{prop:upperBoundL}, we have that
    \begin{align*}
        \mathbb{P}\Big(\big|T_{n,\theta}(\rChild^j\lChild)\big|>ne^{-\left(\frac{1}{\theta}-\epsilon\right)+M}\Big)\leq\mathbb{P}\left(\prod_{0\leq i<j}B_i>e^{-\left(\frac{1}{\theta}-\epsilon\right)j+M}-\frac{j}{n}\right)\,.
    \end{align*}
    Now, let $c\in(0,\infty)$ and $t\in(0,\infty)$. Using Markov's inequality, we have
    \begin{align*}
        \mathbb{P}\left(\prod_{0\leq i<j}B_i>c\right)\leq\frac{1}{c^t}\mathbb{E}\left[\left(\prod_{0\leq i<j}B_i\right)^t\right]\leq\frac{1}{c^t}\mathbb{E}\left[B_0^t\right]^j=\frac{1}{c^t}\left(\frac{\theta}{\theta+t}\right)^j\,.
    \end{align*}
    Since $j\leq k$ and by assumption on the value of $k$, we know that $e^{-\left(\frac{1}{\theta}-\epsilon\right)j+M}-\frac{j}{n}\geq e^{-\left(\frac{1}{\theta}-\epsilon\right)k+M}-\frac{k}{n}>0$. This implies that we can apply the previous bound with $c=e^{-\left(\frac{1}{\theta}-\epsilon\right)j+M}-\frac{j}{n}$ and obtain
    \begin{align*}
        \mathbb{P}\Big(\big|T_{n,\theta}(\rChild^j\lChild)\big|>ne^{-\left(\frac{1}{\theta}-\epsilon\right)+M}\Big)&\leq\left(e^{-\left(\frac{1}{\theta}-\epsilon\right)j+M}-\frac{j}{n}\right)^{-t}\left(\frac{\theta}{\theta+t}\right)^j\\
        &=\exp\left(-t\log\left(e^{-\left(\frac{1}{\theta}-\epsilon\right)j+M}-\frac{j}{n}\right)+j\log\left(\frac{\theta}{\theta+t}\right)\right)\,.
    \end{align*}
    Write $\Xi_j=\frac{je^{\left(\frac{1}{\theta}-\epsilon\right)j}}{ne^M}<1$. Using that
    \begin{align*}
        \log\left(e^{-\left(\frac{1}{\theta}-\epsilon\right)j+M}-\frac{j}{n}\right)=-\left(\frac{1}{\theta}-\epsilon\right)j+M+\log(1-\Xi_j)
    \end{align*}
    and that $\Xi_j\leq\Xi_k$, we eventually have that
    \begin{align*}
        \mathbb{P}\Big(\big|T_{n,\theta}(\rChild^j\lChild)\big|>ne^{-\left(\frac{1}{\theta}-\epsilon\right)+M}\Big)\leq\exp\left(\left(\frac{1}{\theta}-\epsilon\right)tj-t\big(M+\log(1-\Xi_k)\big)+j\log\left(\frac{\theta}{\theta+t}\right)\right)\,.
    \end{align*}
    To conclude the proof, set $t=\lambda=\frac{\epsilon\theta^2}{1-\epsilon\theta}$ to obtain
    \begin{align*}
        \mathbb{P}\Big(\big|T_{n,\theta}(\rChild^j\lChild)\big|>ne^{-\left(\frac{1}{\theta}-\epsilon\right)+M}\Big)\leq\exp\bigg(\Big(\epsilon\theta+\log(1-\epsilon\theta)\Big)j-\lambda\Big(M+\log(1-\Xi_k)\Big)\bigg)\,.
    \end{align*}
    The desired result follows from using a union bound and summing the right hand side over $j\geq0$.
\end{proof}

\section{The height of record-biased trees}\label{sec:height}

In this section, the characteristics of record-biased trees developed in Section~\ref{sec:RBTrees} are used to obtain bounds on their height and eventually prove Theorem~\ref{thm:height} and \ref{thm:strongHeight}.

\subsection{Lower bounds on the height}

We start by proving the following result, corresponding to the lower bound of Theorem~\ref{thm:height}.

\begin{prop}\label{prop:lowerBound}
    Let $\theta\in[0,\infty)$ and $\epsilon>0$. Then, as $n$ tends to infinity
    \begin{align*}
        \mathbb{P}\Big(h(T_{n,\theta})<\big(\max\{c^*,\theta\}-\epsilon\big)\log n\Big)\longrightarrow0\,.
    \end{align*}
\end{prop}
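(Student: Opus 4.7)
The plan is to derive $h(T_{n,\theta}) \geq (\max\{c^*,\theta\} - \epsilon)\log n$ with probability tending to $1$ by combining two separate lower bounds on the height: one of order $\theta \log n$ coming from the length of the rightmost path, and one of order $c^* \log n$ coming from the left subtree of the root together with Devroye's theorem. The maximum of these two bounds yields $h(T_{n,\theta}) \geq \max\{c^*,\theta\}\log n - o_{\mathbb{P}}(\log n)$, which is exactly the desired conclusion. Since $\theta$ is fixed, we will repeatedly use that $\mu(n,\theta) = (1+o(1))\theta\log n$ as $n \to \infty$.

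For the first bound, I would use the trivial inequality $h(T_{n,\theta}) \geq \rec(T_{n,\theta}) - 1$ recalled in the overview. Taking any $\epsilon' > 0$ with $(1-\epsilon')\theta \geq \theta - \epsilon/2$ and applying Lemma~\ref{lem:boundsRecords}, we get $\rec(T_{n,\theta}) \geq (1-\epsilon')\mu(n,\theta)$ except on an event of probability at most $e^{-\delta\mu(n,\theta)} \to 0$. Combined with $\mu(n,\theta) = (1+o(1))\theta\log n$, this gives $h(T_{n,\theta}) \geq (\theta - \epsilon)\log n$ with high probability.

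For the second bound, I would start from $h(T_{n,\theta}) \geq 1 + h(T_{n,\theta}(\lChild))$ and proceed in two steps. First, Proposition~\ref{prop:inductiveTree} supplies the explicit formula
\[
    \mathbb{P}\bigl(|T_{n,\theta}(\lChild)| \geq k\bigr) = \prod_{1 \leq i \leq k}\Bigl(1 - \frac{\theta}{\theta+n-i}\Bigr),
\]
and plugging in $k = \lceil n^\beta\rceil$ for fixed $\beta \in (0,1)$ and lower-bounding each factor by $1 - 2\theta/n$ shows that $|T_{n,\theta}(\lChild)| \geq n^\beta$ with probability tending to $1$. Second, the same proposition states that, conditionally on $|T_{n,\theta}(\lChild)| = m$, the subtree $T_{n,\theta}(\lChild)$ is a random binary search tree on $m$ nodes, so Devroye's theorem (the $\theta = 1$ case of Theorem~\ref{thm:height}, which we take as input from \cite{devroye1986note}) yields $h(T_{n,\theta}(\lChild)) \geq (c^* - \delta)\log m$ with conditional probability tending to $1$ as $m \to \infty$. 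Splitting on the event $\{|T_{n,\theta}(\lChild)| \geq n^\beta\}$ combines these into $h(T_{n,\theta}(\lChild)) \geq \beta(c^* - \delta)\log n$ with high probability.

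Finally, given $\epsilon > 0$, choosing $\beta \in (0,1)$ close enough to $1$ and $\delta > 0$ small enough that $\beta(c^* - \delta) \geq c^* - \epsilon$ completes the argument. I do not foresee any genuine obstacle: both key inputs, Lemma~\ref{lem:boundsRecords} and Devroye's theorem, are supplied, and the only mildly delicate point is the uniform-in-$m$ application of Devroye's theorem after conditioning on $|T_{n,\theta}(\lChild)|$, which is dealt with by the truncation $|T_{n,\theta}(\lChild)| \geq n^\beta$.
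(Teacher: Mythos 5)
Your proposal is correct and follows essentially the same route as the paper: it combines the bound $h(T_{n,\theta})\geq \rec(T_{n,\theta})-1$ together with Lemma~\ref{lem:boundsRecords} for the $\theta\log n$ term, and the bound $h(T_{n,\theta})\geq 1+h(T_{n,\theta}(\lChild))$ together with the product formula from Proposition~\ref{prop:inductiveTree} and Devroye's law of large numbers for the $c^*\log n$ term. The only cosmetic difference is the truncation threshold: you take $|T_{n,\theta}(\lChild)|\geq n^\beta$ for $\beta<1$ and then send $\beta\to1$, whereas the paper takes $|T_{n,\theta}(\lChild)|\geq\lfloor n/\log n\rfloor$, which makes $\log|T_{n,\theta}(\lChild)|=(1-o(1))\log n$ directly and so avoids the extra parameter $\beta$; both versions work and the same elementary estimate $(1-2\theta/n)^{o(n)}\to1$ justifies the truncation in either case.
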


\begin{proof}
    Using Lemma~\ref{lem:boundsRecords} and the fact that $h(T_{n,\theta})\geq\rec(T_{n,\theta})-1$ along with the asymptotic behaviour of $\mu(n,\theta)$ from \eqref{eq:mu} when $\theta$ is fixed, we first have that
    \begin{align*}
        \mathbb{P}\Big(h(T_{n,\theta})<\big(\theta-\epsilon\big)\log n\Big)\longrightarrow0\,,
    \end{align*}
    Note that, in the case $\theta=0$ the previous result trivially holds since $h(T_{n,\theta})>0$. It now only remains to prove that
    \begin{align*}
        \mathbb{P}\Big(h(T_{n,\theta})<\big(c^*-\epsilon\big)\log n\Big)\longrightarrow0\,.
    \end{align*}
    By the definition of the height of trees, we know that
    \begin{align*}
        h(T_{n,\theta})\geq1+h\big(T_{n,\theta}(\lChild)\big)\,.
    \end{align*}
    Hence, it suffices to show that
    \begin{align*}
        \mathbb{P}\Big(h\big(T_{n,\theta}(\lChild)\big)<\big(c^*-\epsilon\big)\log n\Big)\longrightarrow0\,.
    \end{align*}

    Using Proposition~\ref{prop:inductiveTree}, we know that
    \begin{align*}
        \mathbb{P}\Big(\big|T_{n,\theta}(\lChild)\big|\geq k\Big)=\prod_{1\leq i\leq k}\left(1-\frac{\theta}{\theta+n-i}\right)\,,
    \end{align*}
    from which it follows that
    \begin{align*}
        \mathbb{P}\left(\big|T_{n,\theta}(\lChild)\big|\geq\left\lfloor\frac{n}{\log n}\right\rfloor\right)&=\prod_{1\leq i\leq\frac{n}{\log n}}\left(1-\frac{\theta}{\theta+n-i}\right)\geq\left(1-\frac{\theta}{\theta+n}\right)^\frac{n}{\log n}=1-o(1)\,.
    \end{align*}
    Recall now from Proposition~\ref{prop:inductiveTree} that, given $|T_{n,\theta}(\lChild)|=k$, $T_{n,\theta}(\lChild)$ is distributed as a random binary search tree of size $k$. Using the previous lower bound on the size of $T_{n,\theta}(\lChild)$, and the law of large numbers for the height of a random binary search tree~\cite{devroye1986note}, it follows that
    \begin{align*}
        \mathbb{P}\Big(h\big(T_{n,\theta}(\lChild)\big)<(c^*-\epsilon)\log n\Big)\longrightarrow0\,.
    \end{align*}
    This establishes the second desired lower bound for the height of $T_{n,\theta}$ and concludes the proof.
\end{proof}

Proving the lower bound of Theorem~\ref{thm:strongHeight} is similar, but easier.

\begin{prop}\label{prop:strongLowerBound}
    Let $(\theta_n)_{n\geq0}$ be a sequence of non-negative numbers, such that $\theta_n\rightarrow\infty$. Then, for all $\epsilon>0$ and $\lambda>0$, we have
    \begin{align*}
        \mathbb{P}\Big(h(T_{n,\theta_n})<(1-\epsilon)\mu(n,\theta_n)\Big)=O\left(\frac{1}{n^\lambda}\right)\,.
    \end{align*}
\end{prop}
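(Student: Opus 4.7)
The plan is to reduce the bound on the height to a bound on the number of records via the universal inequality $h(T) \geq \rec(T)-1$, and then apply Lemma~\ref{lem:boundsRecords} together with the fact that $\mu(n,\theta_n) = \omega(\log n)$ whenever $\theta_n \to \infty$. This route is specifically flagged by the opening sentence of the proposition ("similar, but easier"), because unlike Proposition~\ref{prop:lowerBound} we do not need to invoke the Devroye constant $c^*$ through the left subtree at all: the rightmost path alone already provides enough height.

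First I would write
\begin{align*}
    \mathbb{P}\bigl(h(T_{n,\theta_n}) < (1-\epsilon)\mu(n,\theta_n)\bigr)
    \leq \mathbb{P}\bigl(\rec(T_{n,\theta_n}) < 1 + (1-\epsilon)\mu(n,\theta_n)\bigr).
\end{align*}
Since $\theta_n \to \infty$ and $\theta \mapsto \mu(n,\theta) = \sum_{0 \leq i < n} \theta/(\theta+i)$ is non-decreasing in $\theta$, for every fixed $M > 0$ we have $\mu(n,\theta_n) \geq \mu(n,M) \sim M \log n$ once $\theta_n \geq M$, so $\mu(n,\theta_n)/\log n \to \infty$, i.e.\ $\mu(n,\theta_n) = \omega(\log n)$. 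In particular $\mu(n,\theta_n) \to \infty$, so for $n$ large enough the additive $1$ is negligible and
\begin{align*}
    1 + (1-\epsilon)\mu(n,\theta_n) \leq \bigl(1 - \tfrac{\epsilon}{2}\bigr)\mu(n,\theta_n),
\end{align*}
so the right-hand side above is bounded by $\mathbb{P}\bigl(|\rec(T_{n,\theta_n})/\mu(n,\theta_n) - 1| > \epsilon/2\bigr)$.

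Next I would apply Lemma~\ref{lem:boundsRecords} with $\epsilon/2$ in place of $\epsilon$, giving a constant $\delta = \delta(\epsilon/2) > 0$ for which
\begin{align*}
    \mathbb{P}\bigl(h(T_{n,\theta_n}) < (1-\epsilon)\mu(n,\theta_n)\bigr) \leq e^{-\delta \mu(n,\theta_n)}
\end{align*}
for all $n$ sufficiently large. Finally, since $\mu(n,\theta_n) = \omega(\log n)$, for any $\lambda > 0$ we have $\delta\mu(n,\theta_n) \geq (\lambda+1)\log n$ eventually, so $e^{-\delta\mu(n,\theta_n)} = o(n^{-\lambda}) = O(n^{-\lambda})$, which is the desired conclusion.

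There is no real obstacle here: the proof is a one-line application of Lemma~\ref{lem:boundsRecords} to the rightmost-path lower bound, together with the monotonicity of $\mu$ in $\theta$ to upgrade the sub-exponential-in-$\mu$ decay to polynomial-in-$n$ decay. The only minor care required is to absorb the additive $1$ coming from $h \geq \rec - 1$ into the $\epsilon$-slack, which is trivial once one knows $\mu(n,\theta_n) \to \infty$.
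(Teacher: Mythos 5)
Your proof is correct and follows essentially the same route as the paper: lower-bound $h$ by $\rec - 1$, apply Lemma~\ref{lem:boundsRecords}, and use $\mu(n,\theta_n) = \omega(\log n)$ to convert the $e^{-\delta\mu}$ bound into $O(n^{-\lambda})$. You are in fact slightly more careful than the paper, which silently absorbs the additive $1$ from $h \geq \rec - 1$; your $\epsilon/2$ adjustment makes that step explicit.
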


\begin{proof}
    Using Lemma~\ref{lem:boundsRecords} along with the fact that $h(T_{n,\theta})\geq\rec(T_{n,\theta})-1$, we have that
    \begin{align*}
        \mathbb{P}\Big(h(T_{n,\theta})\leq(1-\epsilon)\mu(n,\theta_n)\Big)\leq e^{-\delta(\epsilon)\mu(n,\theta_n)}\,.
    \end{align*}
    Moreover, since $\mu(n,\theta_n)=\omega(\log n)$ whenever $\theta_n\rightarrow\infty$, we know that $e^{-\delta(\epsilon)\mu(n,\theta_n)}=O(n^{-\lambda})$, which concludes the proof of the proposition.
\end{proof}

\subsection{Upper bounds on the height}

The first result of this section proves a very useful upper tail bound on the height of a record-biased tree, conditionally given the sizes of its left subtrees. Before stating this result, recall the definition of $E_{n,\theta}(K)$ when $K=(k_j)_{j\geq0}$ from \eqref{eq:EK}:
\begin{align*}
    E_{n,\theta}(K)=\Big\{\big|T_{n,\theta}(\rChild^j\lChild)\big|=k_j,\,\forall j\in\N\Big\}\,.
\end{align*}
Moreover, recall from the end of Section~\ref{subsec:overview} that, conditionally given $E_{n,\theta}(K)$ and given that $\mathbb{P}(E_{n,\theta}(K))>0$, there exists a unique $r=r(K)$ such that $\rec(T_{n,\theta})=r(K)$.

\begin{prop}\label{prop:upperBound}
    Let $n\in\N$ and $\theta\in[0,\infty)$. Consider $K=(k_j)_{j\geq0}$ such that $\mathbb{P}(E_{n,\theta}(K))>0$, where $E_{n,\theta}(K)$ is defined in \eqref{eq:EK} and let $r=r(K)$ be the unique value such that $r+\sum_{0\leq j<k}k_j=n$. Then, for any $\eta\in\N$ and $t>0$
    \begin{align*}
        \mathbb{P}\Big(h(T_{n,\theta})\geq\eta\,\Big|\,E_{n,\theta}(K)\Big)\leq\sum_{0\leq j\leq r}(2e^{-t})^{\eta-j}\cdot(k_j+1)^{e^t-1}
    \end{align*}
\end{prop}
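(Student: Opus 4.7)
The plan is a union bound over nodes at depth $\eta$, combined with an MGF / Chernoff estimate applied to each left subtree hanging off the rightmost path. The argument relies on two ingredients: the conditional structural description of $T_{n,\theta}$ given $E_{n,\theta}(K)$ from Proposition~\ref{prop:inductiveTree}, and a standard profile bound for uniform random binary search trees.

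First I would apply Proposition~\ref{prop:inductiveTree} iteratively along the rightmost path to deduce that, conditionally on $E_{n,\theta}(K)$, the subtrees $\tau_j := T_{n,\theta}(\rChild^j\lChild)$ for $0 \leq j < r$ are mutually independent, with $\tau_j$ distributed as a uniform random binary search tree of size $k_j$. Next I would partition the nodes of $T_{n,\theta}$ at depth $\eta$ by where their ancestral path first leaves the rightmost path: such a node is either $\rChild^\eta$ (on the rightmost path, possible only when $\eta\leq r-1$) or of the form $\rChild^j\lChild v$ with $0\leq j<r$ and $v$ a node of $\tau_j$ at internal depth $\eta-j-1$. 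Writing $Z_m(\tau)$ for the number of nodes at depth exactly $m$ in $\tau$, Markov's inequality and the conditional independence of the $\tau_j$ yield
\[
\mathbb{P}\big(h(T_{n,\theta})\geq\eta\,\big|\,E_{n,\theta}(K)\big)\leq\mathbf{1}\{\eta\leq r-1\}+\sum_{j=0}^{r-1}\mathbb{E}\big[Z_{\eta-j-1}(\tau_{k_j})\big],
\]
where $\tau_k$ denotes a uniform random BST of size $k$.

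The key technical estimate is then a profile bound on random BSTs of the form
\[
\mathbb{E}[Z_m(\tau_k)]\leq(2e^{-t})^{m+1}(k+1)^{e^t-1}\quad\text{for all }t>0,\ k\geq 0,\ m\geq 0.
\]
I would prove this by studying $\phi(k,z):=\mathbb{E}\big[\sum_{v\in\tau_k}z^{|v|}\big]$. Conditioning on the (uniformly chosen) root label of $\tau_k$ and using the independence of its two subtrees gives the first-order recurrence $k\phi(k,z)=(k-1+2z)\phi(k-1,z)+1$, which solves in closed form to $\phi(k,z)=\big(\binom{k+2z-1}{k}-1\big)/(2z-1)$. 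Since $\sum_v z^{|v|}\geq z^m Z_m(\tau_k)$, Markov's inequality yields $\mathbb{E}[Z_m(\tau_k)]\leq z^{-m}\phi(k,z)$; specializing $z=e^t/2$ and combining with the Bernoulli-inequality-based bound $\binom{k+e^t-1}{k}\leq(k+1)^{e^t-1}$ (provable by induction on $k$, valid for $e^t\geq 2$) yields the claim in the regime $t\geq\log 2$. For $0<t<\log 2$ the stated right-hand side already exceeds $2^{m+1}$, which is a trivial upper bound on $Z_m$.

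Substituting $m=\eta-j-1$ and summing gives the contribution $\sum_{j=0}^{r-1}(2e^{-t})^{\eta-j}(k_j+1)^{e^t-1}$. The rightmost-path indicator $\mathbf{1}\{\eta\leq r-1\}$ is absorbed into the $j=r$ term of the stated bound, which equals $(2e^{-t})^{\eta-r}$ (since $k_r=0$); for $t\geq\log 2$ this term is at least $1$ precisely when $\eta\leq r-1$, recovering the trivial bound in that range. The main obstacle is the BST profile estimate: while the recursive derivation of $\phi(k,z)$ is routine, the passage from $\phi(k,e^t/2)$ to the clean exponent $(k+1)^{e^t-1}$ requires the Bernoulli-type binomial estimate and a separate trivial argument at small $t$; everything else is a bookkeeping exercise in conditional independence and linearity of expectation.
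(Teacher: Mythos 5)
Your proof takes a genuinely different route, although it computes essentially the same first moment as the paper in a different disguise. The paper applies the identity $h(T)=\max_j\{j+h(T(\rChild^j\lChild))\}$, union-bounds over the $2^{\eta-j}$ paths of length $\eta-j$ to reduce to $\mathbb{P}(\rChild^{\eta-j}\in\mathrm{RBST}_j)$, rewrites this as a tail bound on the record count, and applies a Chernoff bound via Corollary~\ref{cor:mgfRecord}. You instead work directly with the expected depth profile $\mathbb{E}[Z_m(\tau_k)]$ via the generating function $\phi(k,z)$, obtaining the exact closed form $\phi(k,z)=(\binom{k+2z-1}{k}-1)/(2z-1)$ and then bounding the generalized binomial coefficient. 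Since, in a uniform random BST, all nodes at a given depth have the same inclusion probability (so $\mathbb{E}[Z_m(\tau_k)]=2^m\mathbb{P}(\rChild^m\in\tau_k)$), and since $\mathbb{E}[(2z)^{\rec(\tau_k)}]=\binom{k+2z-1}{k}$, the two computations are tightly linked; but your generating-function formulation packages the answer in a cleaner closed form and sidesteps the paper's harmonic-number estimate. The recurrence, closed form, and the Bernoulli-induction bound $\binom{k+a}{k}\leq(k+1)^a$ for $a\geq1$ all check out, so your argument establishes the proposition for $t\geq\log 2$, which is the range of $t$ used everywhere downstream ($t=\log c^*$ and $t=\log(2e)$).

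There is, however, a concrete flaw in your handling of the regime $0<t<\log 2$. You assert that there $(2e^{-t})^{m+1}(k+1)^{e^t-1}\geq 2^{m+1}$, citing this as a trivial upper bound on $Z_m$. Neither clause is right: the trivial bound is $Z_m\leq 2^m$ (not $2^{m+1}$), and the claimed inequality can fail — for instance with $k=11$, $m=10$, $t=0.1$, one finds $(2e^{-t})^{11}(k+1)^{e^t-1}\approx 9\cdot10^2$, well below $2^{10}$. So the profile lemma is not established by this argument when $t<\log 2$. The fix is easier than you realize: for $t\leq\log 2$ one has $2e^{-t}\geq1$, so the $j=0$ summand $(2e^{-t})^{\eta}(k_0+1)^{e^t-1}$ of the stated bound is already $\geq1$, and the proposition holds trivially since the left-hand side is a probability. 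You should simply restrict the profile lemma to $t\geq\log 2$ and dispatch $t\leq\log 2$ by this triviality. One more small slip: for $t\geq\log 2$ the $j=r$ term $(2e^{-t})^{\eta-r}$ is $\geq1$ whenever $\eta\leq r$, not ``precisely when $\eta\leq r-1$''; the absorption of the indicator still works, but the stated equivalence is off.
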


\begin{proof}
    Thanks to Proposition~\ref{prop:inductiveTree}, for any vector $K=(k_j)_{j\geq0}$ such that $\mathbb{P}(E_{n,\theta}(K))>0$, conditionally given $E_{n,\theta}(K)$, the trees $(T_{n,\theta}(\rChild^j\lChild))_{j\geq0}$ are independent random binary search trees of respective sizes $K=(k_j)_{j\geq0}$. Using this fact together with the identity
    \begin{align*}
        h(T_{n,\theta})=\max_{0\leq j\leq\rec(T_{n,\theta})}\Big\{j+h\big(T_{n,\theta}(\rChild^j\lChild)\big)\Big\}\,.
    \end{align*}
    it follows that
    \begin{align*}
       \mathbb{P}\Big(h(T_{n,\theta})\geq\eta\,\Big|\,E_{n,\theta}(K)\Big)&=\mathbb{P}\left(\max_{0\leq j\leq\rec(T_{n,\theta})}\Big\{j+h\big(T_{n,\theta}(\rChild^j\lChild)\big)\Big\}\geq\eta\,\bigg|\,E_{n,\theta}(K)\right)\\
        &=\mathbb{P}\left(\max_{0\leq j\leq r}\Big\{j+h\big(\mathrm{RBST}_j\big)\Big\}\geq\eta\,\Big|\,E_{n,\theta}(K)\right)\,,
    \end{align*}
    where $(\mathrm{RBST}_j)_{j\geq0}$ are independent random binary search trees of respective sizes $k_j$, also independent of $E_{n,\theta}(K)$. Using a union bound, we obtain that
    \begin{align}
        \mathbb{P}\Big(h(T_{n,\theta})\geq\eta\,\Big|\,E_{n,\theta}(K)\Big)\leq\sum_{0\leq j\leq r}\mathbb{P}\Big(j+h\big(\mathrm{RBST}_j\big)\geq\eta\Big)\,.\label{eq:upperBound1}
    \end{align}
    
    To bound the right-hand side, use a union bound over the $2^{\eta-j}$ paths of length $\eta-j$ from the root of $T_\infty$ to obtain
    \begin{align}
        \mathbb{P}\Big(j+h\big(\mathrm{RBST}_j\big)\geq\eta\Big)\leq\sum_{v\in T_\infty:|v|=\eta-j}\mathbb{P}\big(v\in\mathrm{RBST}_j\big)=2^{\eta-j}\mathbb{P}\big(\rChild^{\eta-j}\in\mathrm{RBST}_j\big)\,.\label{eq:upperBound2}
    \end{align}
    Using the moment generating function from Corollary~\ref{cor:mgfRecord} and the fact that a binary search tree is simply a record-biased tree with parameter $\theta=1$, we have that
    \begin{align*}
        \mathbb{P}\big(\rChild^{\eta-j}\in\mathrm{RBST}_j\big)&=\mathbb{P}\Big(\rec(\mathrm{RBST}_j)\geq\eta-j+1\Big)\leq e^{-t(\eta-j+1)}\prod_{1\leq\ell\leq k_j}\left(1+(e^t-1)\frac{1}{k_j-\ell+1}\right)\,.
    \end{align*}
    Now, using that
    \begin{align*}
        \prod_{1\leq i\leq n}\left(1+(e^t-1)\frac{1}{n-i+1}\right)\leq\exp\left((e^t-1)\sum_{1\leq\ell\leq k_j}\frac{1}{\ell}\right)\leq\exp\Big((e^t-1)\log(k_j+1)\Big)
    \end{align*}
    and that $e^{-t}\leq 1$, we obtain that
    \begin{align}
        \mathbb{P}\big(\rChild^{\eta-j}\in\mathrm{RBST}_j\big)\leq\exp\Big(-t(\eta-j)+(e^t-1)\log(k_j+1)\Big)=e^{-t(\eta-j)}\cdot(k_j+1)^{e^t-1}\,.\label{eq:upperBound3}
    \end{align}
    Combining \eqref{eq:upperBound1}, \eqref{eq:upperBound2}, and \eqref{eq:upperBound3}, we obtain the desired bound.
\end{proof}

We now use Proposition~\ref{prop:upperBound} to prove the following result, which allows us to extend our convergence in probability results to the convergence in $L^p$. Recall the definition of $\mu(n,\theta)$ from \eqref{eq:mu}.

\begin{prop}\label{prop:UI}
    For any sequence $(\theta_n)_{n\geq0}$ and any $p>0$, the family of random variables
    \begin{align*}
        \left(\left(\frac{h(T_{n,\theta_n})}{\max\{c^*\log n,\mu(n,\theta_n)\}}\right)^p\right)_{n\geq0}
    \end{align*}
    is uniformly integrable.
\end{prop}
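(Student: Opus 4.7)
The plan is to establish the stronger statement that $\sup_{n\ge 2}\mathbb{E}[X_n^q]<\infty$ for every $q>0$, where $X_n:=h(T_{n,\theta_n})/M_n$ and $M_n:=\max\{c^*\log n,\mu(n,\theta_n)\}$. Since $X_0$ and $X_1$ vanish trivially, and since boundedness of any moment strictly above $p$ is a standard sufficient condition for uniform integrability of $(X_n^p)$, this reduces the proposition to producing upper tail bounds on $X_n$ that are uniform in $n\ge 2$ and decay superpolynomially in $\alpha$.

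For the tail bound, I apply Proposition~\ref{prop:upperBound} conditionally on the vector of left subtree sizes $K=(|T_{n,\theta_n}(\rChild^j\lChild)|)_{j\ge 0}$. For $t>\log 2$, bounding $(k_j+1)^{e^t-1}\le n^{e^t-1}$ and summing the geometric series in $j$ gives
\begin{align*}
    \mathbb{P}\bigl(h(T_{n,\theta_n})\ge\eta\bigm|E_{n,\theta_n}(K)\bigr)\le\frac{n^{e^t-1}}{1-2e^{-t}}\cdot(2e^{-t})^{\eta-\rec(T_{n,\theta_n})}.
\end{align*}
Averaging over $K$ and controlling $\mathbb{E}[(e^t/2)^{\rec(T_{n,\theta_n})}]$ via the moment generating function from Corollary~\ref{cor:mgfRecord} together with $1+x\le e^x$ produces the unconditional bound
\begin{align*}
    \mathbb{P}\bigl(h(T_{n,\theta_n})\ge\eta\bigr)\le\frac{n^{e^t-1}}{1-2e^{-t}}\,(2e^{-t})^\eta\,\exp\bigl((e^t/2-1)\,\mu(n,\theta_n)\bigr).
\end{align*}

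Setting $\eta=\alpha M_n$ and invoking both $\log n\le M_n/c^*$ and $\mu(n,\theta_n)\le M_n$, the logarithm of the right-hand side is at most $M_n\cdot F(\alpha,t)+O(1)$ with
\begin{align*}
    F(\alpha,t)=\frac{e^t-1}{c^*}+\left(\frac{e^t}{2}-1\right)-\alpha(t-\log 2).
\end{align*}
Minimizing in $t$ gives $e^{t^*}=2c^*\alpha/(c^*+2)$ and $\min_tF(\alpha,t)=\alpha\log(e(c^*+2)/(c^*\alpha))-1/c^*-1$, which tends to $-\infty$ like $-\alpha\log\alpha$ as $\alpha\to\infty$. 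Since $M_n\ge c^*\log 2>0$ for every $n\ge 2$, this yields a uniform tail bound $\mathbb{P}(X_n\ge\alpha)\le C\exp(-c\alpha\log\alpha)$ for $\alpha\ge\alpha_0$ large enough, and the layer-cake formula then gives $\sup_{n\ge 2}\mathbb{E}[X_n^q]\le\alpha_0^q+q\int_{\alpha_0}^\infty\alpha^{q-1}Ce^{-c\alpha\log\alpha}\,d\alpha<\infty$ for any $q>0$. The main subtlety is tuning $t$ as a function of $\alpha$ so that the geometric saving $(2e^{-t})^\eta$ dominates the polynomial growth $n^{e^t-1}$ (and the contribution of $\mu$ remains controlled by $M_n$); once this optimization is in place, the rest is routine calculus and integration.
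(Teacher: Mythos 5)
Your proposal is correct and takes a genuinely different, and in fact sharper, route than the paper. The paper fixes $t=\log(2e)$ and replaces the conditioning variable $r(K)$ by a high-probability upper bound $\rec(T_{n,\theta_n})\le 2\mu(n,\theta_n)+\log(n+1)$ (via Lemma~\ref{lem:boundsRecords} when $\mu$ is large, and Markov's inequality otherwise), concluding that $\mathbb{P}(X_n^p\ge a)\to 0$ for each fixed large $a$; that argument carries an $o(1)$ error term that does not decay in $a$. You instead keep $t$ free, sum the geometric series exactly, and then average $(e^t/2)^{r(K)}$ over $K$ using the explicit moment generating function of $\rec(T_{n,\theta_n})$ from Corollary~\ref{cor:mgfRecord} together with $1+x\le e^x$. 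Optimizing $t$ as a function of $\alpha$ (so $e^{t^*}=2c^*\alpha/(c^*+2)$, and $\min_t F=\alpha\log(e(c^*+2)/(c^*\alpha))-1/c^*-1$) yields a tail bound $\mathbb{P}(X_n\ge\alpha)\le Ce^{-c\alpha\log\alpha}$ that is simultaneously uniform in $n\ge 2$ and superpolynomially decaying in $\alpha$, hence $\sup_{n\ge 2}\mathbb{E}[X_n^q]<\infty$ for every $q>0$. This is cleaner and strictly stronger than the statement the paper derives, and it cleanly delivers uniform integrability via the bounded-higher-moment criterion. Two minor points to make explicit in a write-up: Proposition~\ref{prop:upperBound} requires $\eta\in\N$, so set $\eta=\lceil\alpha M_n\rceil$ and use $2e^{-t^*}<1$ to discard the ceiling; and the prefactor $1/(1-2e^{-t^*})$ is bounded (indeed tends to $1$) once $\alpha>(c^*+2)/c^*$, so it is absorbed into $C$.
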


\begin{proof}
    First, write $\nu_n=\max\{c^*\log n,\,\mu(n,\theta_n)\}$ and fix $a>0$. We have that
    \begin{align*}
        \mathbb{P}\left(\left(\frac{h(T_{n,\theta_n})}{\nu_n}\right)^p\geq a\right)=\mathbb{P}\big(h(T_{n,\theta_n})\geq a^\frac{1}{p}\nu_n\big)\,.
    \end{align*}
    Using Proposition~\ref{prop:upperBound} with $\eta=\lceil a^\frac{1}{p}\nu_n\rceil$ and the fact that $k_j\leq n$, it follows that
    \begin{align*}
        \mathbb{P}\Big(h(T_{n,\theta_n})\geq\eta\,\Big|\,E_{n,\theta_n}(K)\Big)\leq\sum_{0\leq j\leq r(K)}(2e^{-t})^{\eta-j}(n+1)^{e^t-1}\,.
    \end{align*}
    Moreover, by using Lemma~\ref{lem:boundsRecords}, we know that there exists a universal constant $\delta>0$ such that
    \begin{align*}
        \mathbb{P}\Big(\rec(T_{n,\theta_n})>2\mu(n,\theta_n)\Big)\leq e^{-\delta\mu(n,\theta_n)}\,.
    \end{align*}
    In the case where $\mu(n,\theta_n)$ is bounded, the previous tail bound might not suffice for our purpose. However, in this case, we would have that $\mathbb{E}[\rec(T_{n,\theta_n})]=\mu(n,\theta_n)=O(1)$, from which a simple application of Markov's inequality gives us that
    \begin{align*}
        \mathbb{P}\Big(\rec(T_{n,\theta_n})>2\mu(n,\theta_n)+\log(n+1)\Big)=O\left(\frac{1}{\log n}\right)=o(1)\,.
    \end{align*}
    Combining both bounds, we have that
    \begin{align*}
        \mathbb{P}\Big(\rec(T_{n,\theta_n})>2\mu(n,\theta_n)+\log(n+1)\Big)=o(1)\,.
    \end{align*}
    
    Combine the previous tail bounds to obtain that
    \begin{align*}
        \mathbb{P}\Big(h(T_{n,\theta_n})\geq\eta\Big)&\leq\sum_{K:r(K)\leq 2\mu(n,\theta_n)+\log(n+1)}\mathbb{P}\Big(h(T_{n,\theta_n})\geq\eta\,\Big|\,E_{n,\theta_n}(K)\Big)\mathbb{P}\Big(E_{n,\theta_n}(K)\Big)+o(1)\\
        &\leq\sum_{0\leq j\leq2\mu(n,\theta_n)+\log(n+1)}(2e^{-t})^{\eta-j}(n+1)^{e^t-1}+o(1)\,.
    \end{align*}
    Let $t=\log(2e)$. The previous equation can then be rewritten as
    \begin{align*}
        \mathbb{P}\Big(h(T_{n,\theta_n})\geq\eta\Big)&\leq\sum_{0\leq j\leq2\mu(n,\theta_n)+\log(n+1)}\frac{1}{e^{\eta-j}}(n+1)^{1+e}+o(1)\\
        &=\frac{1}{e-1}\exp\Big(2\mu(n,\theta_n)+(2+e)\log(n+1)-\eta\Big)+o(1)\,.
    \end{align*}
    Recall now that $\eta=\lceil a^\frac{1}{p}\nu_n\rceil$ and that $\nu_n=\max\{c^*\log n,\,\mu(n,\theta_n)\}$. This implies that, whenever $a>\left(2+\frac{2+e}{c^*}\right)^p$, we have
    \begin{align*}
        \eta-2\mu(n,\theta_n)-(2+e)\log(n+1)>\left(a^\frac{1}{p}-2-\frac{2+e}{c^*}+o(1)\right)\max\big\{c^*\log n,\,\mu(n,\theta_n)\big\}\longrightarrow\infty\,.
    \end{align*}
    It follows that, for such $a$,
    \begin{align*}
        \lim_{n\rightarrow\infty}\mathbb{P}\left(\left(\frac{h(T_{n,\theta_n})}{\nu_n}\right)^p\geq a\right)=0\,,
    \end{align*}
    establishing the claimed uniform integrability.
\end{proof}

\subsection{Convergence of the height}

In this section, we build on the results of the previous sections to finally prove Theorem~\ref{thm:height} and \ref{thm:strongHeight}.

\begin{proof}[Proof of Theorem~\ref{thm:height}]
    Thanks to the uniform integrability proved in Proposition~\ref{prop:UI}, we only need to establish the convergence in probability in order to prove the theorem. Moreover, Proposition~\ref{prop:lowerBound} already proved the requisite lower bound, so it only remains to prove the upper bound.
    
    Assume first that $\theta=0$. In this case, $\rec(T_{n,\theta})=1$ and we have that
    \begin{align*}
        T_{n,\theta}=\{\varnothing\}\cup\big(\lChild T_{n,\theta}(\lChild)\big)\,,
    \end{align*}
    which implies that
    \begin{align*}
        h(T_{n,\theta})=1+h\big(T_{n,\theta}(\lChild)\big)\,.
    \end{align*}
    Using that, in this case, $T_{n,\theta}(\lChild)$ is random binary search tree of size $n-1$, along with the law of large numbers for the height of random binary search trees~\cite{devroye1986note}, it follows that
    \begin{align*}
        \mathbb{P}\Big(h(T_{n,\theta})>(c^*+\epsilon)\log n\Big)\longrightarrow0\,,
    \end{align*}
    which is the desired upper bound. Assume for the rest of the proof that $\theta>0$.
    
    For any $a>0$, by using the bound from Proposition~\ref{prop:upperBound}, we have
    \begin{align*}
        \mathbb{P}\Big(h(T_{n,\theta})\geq a\log n\,\Big|\,E_{n,\theta}(K)\Big)\leq\sum_{0\leq j\leq r(K)}(2e^{-t})^{\lfloor a\log n\rfloor-j}\cdot(k_j+1)^{e^t-1}\,.
    \end{align*}
    Moreover, from Lemma~\ref{lem:boundsRecords}, we know that, for any $\alpha>0$, as $n$ tends to infinity
    \begin{align*}
        \mathbb{P}\Big(\rec(T_{n,\theta})>(\theta+\alpha)\log n\Big)\longrightarrow0\,.
    \end{align*}
    Finally, Proposition~\ref{prop:boundL} tells us that, for any fixed $\beta>0$ such that $\beta\theta<1$, by setting $k=\left\lfloor\left(\frac{1}{\theta}-\beta\right)^{-1}\log n\right\rfloor$ and $M=2\log\log n$, we have
    \begin{align*}
        \mathbb{P}\Big(\exists 0\leq j\leq k,|T_{n,\theta}(\rChild^j\lChild)|>ne^{-\left(\frac{1}{\theta}-\beta\right)j+M}\Big)\leq Ce^{-\lambda M}\cdot\left(1-\frac{k}{e^M}\right)^{-\lambda}\longrightarrow0\,.
    \end{align*}
    Now fix $\alpha,\beta>0$ such that $\left(\frac{1}{\theta}-\beta\right)^{-1}>(\theta+\alpha)$, for example we could take $0<\beta<\frac{1}{\theta}$ and set $\alpha=\frac{\beta\theta}{2}\left(\frac{1}{\theta}-\beta\right)^{-1}$. Combining the previous results, we obtain
    % \begin{align*}
    %     \mathbb{P}\Big(h(T_{n,\theta})\geq a\log n\Big)&\leq\sum_{\underset{k_j\leq n\exp\left(-\left(\frac{1}{\theta}-\beta\right)j+M\right)}{K:r(K)\leq(\theta+\alpha)\log n}}\mathbb{P}\Big(h(T_{n,\theta})\geq a\log n\,\Big|\,E_{n,\theta}(K)\Big)\mathbb{P}\Big(E_{n,\theta}(K)\Big)+o(1)\\
    %     &\leq\sum_{0\leq j\leq(\theta+\alpha)\log n}(2e^{-t})^{\lfloor a\log n\rfloor-j}\cdot\left(ne^{-\left(\frac{1}{\theta}-\beta\right)j+M}+1\right)^{e^t-1}+o(1)\,,
    % \end{align*}
    \begin{align*}
        \mathbb{P}\Big(h(T_{n,\theta})\geq a\log n\Big)&\leq\sum_{K:\left\{\genfrac{}{}{0pt}{}{r(K)\leq(\theta+\alpha)\log n\hfill}{k_j\leq n\cdot\exp\left(-\left(\frac{1}{\theta}-\beta\right)j+M\right)}\right.}\mathbb{P}\Big(h(T_{n,\theta})\geq a\log n\,\Big|\,E_{n,\theta}(K)\Big)\mathbb{P}\Big(E_{n,\theta}(K)\Big)+o(1)\\
        &\leq\sum_{0\leq j\leq(\theta+\alpha)\log n}(2e^{-t})^{\lfloor a\log n\rfloor-j}\cdot\left(ne^{-\left(\frac{1}{\theta}-\beta\right)j+M}+1\right)^{e^t-1}+o(1)\,,
    \end{align*}
    where the bounds on $k_j$ hold since we assumed that $\left(\frac{1}{\theta}-\beta\right)^{-1}>(\theta+\alpha)$, implying that $j\leq(\theta+\alpha)\log n\leq k=\left\lfloor\left(\frac{1}{\theta}-\beta\right)^{-1}\log n\right\rfloor$ for $n$ large enough. Moreover, since $\left(\frac{1}{\theta}-\beta\right)j\leq\left(\frac{1}{\theta}-\beta\right)(\theta+\alpha)\log n<\log n$, it follows that $ne^{-\left(\frac{1}{\theta}-\beta\right)j+M}+1\leq e^{\log n-\left(\frac{1}{\theta}-\beta\right)j+o(\log n)}$. Similarly, $(2e^{-t})^{\lfloor a\log n\rfloor-j}=e^{(\log 2-t)(a\log n-j)+o(\log n)}$. Combining these formulas with the previous upper tail bound on the height, we obtain that, for any $0<\beta<\frac{1}{\theta}$, and for any $0<\alpha<\beta\theta\left(\frac{1}{\theta}-\beta\right)^{-1}$,
    \begin{align*}
        \mathbb{P}\Big(h(T_{n,\theta})\geq a\log n\Big)&\leq\sum_{0\leq j\leq(\theta+\alpha)\log n}e^{(\log 2-t)(a\log n-j)+o(\log n)}\cdot\left(e^{\log n-\left(\frac{1}{\theta}-\beta\right)j+o(\log n)}\right)^{e^t-1}+o(1)\\
        &=e^{[a(\log2-t)+(e^t-1)+o(1)]\log n}\sum_{0\leq j\leq(\theta+\alpha)\log n}\left(e^{-\log2+t-(e^t-1)\left(\frac{1}{\theta}-\beta\right)}\right)^j+o(1)\,.
    \end{align*}
    Set now $t=\log c^*$ and recall that $c^*\log\left(\frac{2e}{c^*}\right)=1$ to eventually rewrite the previous bound into
    \begin{align}
        &\mathbb{P}\Big(h(T_{n,\theta})\geq a\log n\Big)\label{eq:height1}\\
        &\hspace{0.5cm}\leq\exp\left(-(c^*-1)\left[\frac{a}{c^*}-1+o(1)\right]\log n\right)\sum_{0\leq j\leq(\theta+\alpha)\log n}\exp\left((c^*-1)\left[\frac{1}{c^*}-\frac{1}{\theta}+\beta\right]j\right)+o(1)\,.\notag
    \end{align}
    We now bound the right-hand side from above in two cases, according to whether $\theta<c^*$ or $\theta\geq c^*$.
    
    \textbf{Case 1:} $\theta<c^*$. In this case, for $\beta$ small enough, the sum in $j$ in \eqref{eq:height1} can be bounded by a constant depending only on $c^*$ and $\theta$. Moreover, for any $\epsilon>0$, taking $a=c^*+\epsilon$, the term before the sum converges to $0$. This proves that
    \begin{align*}
        \mathbb{P}\Big(h(T_{n,\theta})\geq (c^*+\epsilon)\log n\Big)\longrightarrow0\,.
    \end{align*}
    Since we assumed that $\theta<c^*$, this corresponds to the desired upper bound in this case.
    
    \textbf{Case 2:} $\theta\geq c^*$. For any $\epsilon>0$, taking $a=\theta+\epsilon$ and using \eqref{eq:height1}, we have that
    \begin{align}
        &\mathbb{P}\Big(h(T_{n,\theta})\geq(\theta+\epsilon)\log n\Big)\label{eq:height2}\\
        &\hspace{0cm}\leq\exp\left(-(c^*-1)\left[\frac{\theta+\epsilon}{c^*}-1+o(1)\right]\log n\right)\sum_{0\leq j\leq(\theta+\alpha)\log n}\exp\left((c^*-1)\left[\frac{1}{c^*}-\frac{1}{\theta}+\beta\right]j\right)+o(1)\notag\\
        &\hspace{0cm}\leq\exp\left(-(c^*-1)\left[\frac{\theta+\epsilon}{c^*}-1+o(1)\right]\log n\right)\exp\left(-(c^*-1)\left[\frac{1}{c^*}-\frac{1}{\theta}+\beta\right](\theta+\alpha)\log n+O(1)\right)+o(1)\notag\\
        &\hspace{0cm}=\exp\left(-(c^*-1)\left[\frac{\theta+\epsilon}{c^*}-1-\left(\frac{1}{c^*}-\frac{1}{\theta}+\beta\right)(\theta+\alpha)\right]\log n+o(\log n)\right)+o(1)\,,\notag
    \end{align}
    In order to conclude the proof in this case, note that
    \begin{align*}
        \frac{\theta+\epsilon}{c^*}-1-\left(\frac{1}{c^*}-\frac{1}{\theta}+\beta\right)(\theta+\alpha)&=\frac{\epsilon}{c^*}-\alpha\left(\frac{1}{c^*}-\frac{1}{\theta}\right)-\beta\theta-\alpha\beta\,.
    \end{align*}
    Moreover, the upper tail bound of \eqref{eq:height2} holds for any $\epsilon>0$, $0<\beta<\frac{1}{\theta}$, and $0<\alpha<\beta\theta\left(\frac{1}{\theta}-\beta\right)^{-1}$. Hence, for any fixed $\epsilon>0$, we can choose $\alpha$ and $\beta$ small enough, so that
    \begin{align*}
        \frac{\theta+\epsilon}{c^*}-1-\left(\frac{1}{c^*}-\frac{1}{\theta}+\beta\right)(\theta+\alpha)>0\,.
    \end{align*}
    This implies that
    \begin{align*}
        \mathbb{P}\Big(h(T_{n,\theta})\geq(\theta+\epsilon)\log n\Big)\longrightarrow0\,,
    \end{align*}
    which proves the desired upper bound in the case $\theta\geq c^*$. This concludes the proof of Theorem~\ref{thm:height}.
\end{proof}

We conclude this section with the proof of Theorem~\ref{thm:strongHeight}.

\begin{proof}[Proof of Theorem~\ref{thm:strongHeight}]
    Let $(\theta_n)_{n\geq0}$ be such that $\theta_n\rightarrow\infty$ and fix $\epsilon>0$. We start by proving the desired upper tail bound.
    
    Using Proposition~\ref{prop:upperBound}, we know that
    \begin{align*}
        \mathbb{P}\Big(h(T_{n,\theta_n})\geq\eta\,\Big|\,E_{n,\theta}(K)\Big)\leq\sum_{0\leq j\leq r(K)}(2e^{-t})^{\eta-j}\cdot(k_j+1)^{e^t-1}\,.
    \end{align*}
    Moreover, we know that $k_j\leq n$. Using this upper bound for $k_j$ and letting $t=\log(2e)$ and $\eta=\lceil(1+2\epsilon)\mu(n,\theta_n)\rceil$, we obtain
    \begin{align*}
        \mathbb{P}\Big(h(T_{n,\theta_n})\geq(1+2\epsilon)\mu(n,\theta_n)\,\Big|\,E_{n,\theta}(K)\Big)&\leq\sum_{0\leq j\leq r(K)}e^{j-\eta}\cdot(n+1)^{e+1}\\
        &\leq\frac{1}{e-1}\cdot(n+1)^{e+1}\cdot e^{r(K)-\eta}\,.
    \end{align*}
    Finally, using Lemma~\ref{lem:boundsRecords}, we know that
    \begin{align*}
        \mathbb{P}\Big(\rec(T_{n,\theta_n})>(1+\epsilon)\mu(n,\theta_n)\Big)\leq e^{-\delta(\epsilon)\mu(n,\theta_n)}\,,
    \end{align*}
    from which we obtain that
    \begin{align*}
        &\mathbb{P}\Big(h(T_{n,\theta_n})\geq(1+2\epsilon)\mu(n,\theta_n)\Big)\\
        &\hspace{0.5cm}\leq\sum_{K:r(K)\leq(1+\epsilon)\mu(n,\theta_n)}\mathbb{P}\Big(h(T_{n,\theta_n})\geq(1+2\epsilon)\mu(n,\theta_n)\,\Big|\,E_{n,\theta}(K)\Big)\mathbb{P}\Big(E_{n,\theta_n}(K)\Big)+e^{-\delta(\epsilon)\mu(n,\theta_n)}\\
        &\hspace{0.5cm}\leq\frac{1}{e-1}\cdot(n+1)^{e+1}\cdot e^{(1+\epsilon)\mu(n,\theta_n)-\eta}+e^{-\delta(\epsilon)\mu(n,\theta_n)}\,.
    \end{align*}
    From the definition of $\eta$, we know that $(1+\epsilon)\mu(n,\theta_n)-\eta\leq-\epsilon\mu(n,\theta_n)$. Moreover, in the case where $\theta_n\rightarrow\infty$, we know that $\mu(n,\theta_n)=\omega(\log n)$, from which we obtain that
    \begin{align*}
        \mathbb{P}\Big(h(T_{n,\theta_n})\geq(1+2\epsilon)\mu(n,\theta_n)\Big)\leq e^{-\epsilon\mu(n,\theta_n)+o(\mu(n,\theta_n))}+e^{-\delta(\epsilon)\mu(n,\theta_n)}=O\left(\frac{1}{n^\lambda}\right)\,.
    \end{align*}
    This corresponds to the desired upper tail bound. Combining this result with the lower tail bound from Proposition~\ref{prop:strongLowerBound}, we proved both the tail bound of the theorem and the convergence in probability. To conclude this proof, use Proposition~\ref{prop:UI} to extend the convergence in probability to the convergence in $L^p$ for any $p>0$.
\end{proof}

\section{Future work and open questions}

\begin{itemize}
    \item This work describes the first order asymptotic behaviour of the height of record-biased trees, and it is natural to next consider lower-order fluctuations. In the regime where $\theta>c^*$, the height is principally controlled by the number of records, and the latter quantity satisfies a central limit theorem; although we do not prove this, it is fairly straightforward to do so using the methods developed in this paper. It is is therefore natural to suspect that the height also satisfies a central limit theorem. On the other hand, when $\theta<c^*$, the height of record-biased permutations and the results of~\cite{drmota2003analytic,reed2003height} suggest that, in this case, $h(T_{n,\theta})$ has asymptotically bounded variance as $n\rightarrow\infty$. In the case that $\theta=c^*$, or more generally that $\theta_n\rightarrow c^*$, it is less clear to us what second-order behaviour to expect.
    \item The similarities between this work and~\cite{addario2021height} raise the question about more general frameworks under which these two models and results fall. Possible developments might come from using a more general model of random trees~\cite{evans2012trickle}, or a more general model of random permutations~\cite{pitman2019regenerative} upon which the binary search trees are built. Considering the height of these generalized models of trees might lead to a unifying framework for Mallows and record-biased trees, as well as other models.
\end{itemize}

\subsection*{Acknowledgements}

BC wishes to thank his supervisor, Louigi Addario-Berry, for his help with the general structure and presentation of this paper. During the preparation of this research, BC was supported by an ISM scholarship.

\medskip
\bibliographystyle{siam}
\bibliography{articles}

\end{document}